\let\uml\"
\newcommand{\N}{\mathbb{N}}
\newtheorem*{maintheorem*}{Main Theorem}
\newtheorem{theorem}{Theorem}[section]
\newtheorem{prop}[theorem]{Proposition}
\newtheorem{conj}[theorem]{Conjecture}
\newtheorem{lemma}[theorem]{Lemma}
\newtheorem{cor}[theorem]{Corollary}
\theoremstyle{definition}
\newtheorem{definition}[theorem]{Definition}
\newtheorem{example}[theorem]{Example}
\numberwithin{equation}{section}
\newcommand{\nn}{\mathbb{N}}
\newcommand{\zz}{\mathbb{Z}}
\newcommand{\supp}{\mathsf{supp}}
\keywords{Goldbach conjecture, Laurent polynomials, Laurent series, semidomains, additively reduced semidomains, atomic semidomains}
\subjclass[2010]{Primary: 11P32, 16Y60; Secondary: 20M13}
\begin{document}
	
	\mbox{}
	\title{A Goldbach theorem for Laurent series semidomains}

	\author{Nathan Kaplan}
	\address{Department of Mathematics\\University of California \\Irvine, CA 92697}
	\email{nckaplan@math.uci.edu}
	
	\author{Harold Polo}
	\address{Department of Mathematics\\University of California \\Irvine, CA 92697}
	\email{harold.polo@uci.edu}

\date{\today}

\begin{abstract}
	A semidomain is a subsemiring of an integral domain. One can think of a semidomain as an integral domain in which additive inverses are no longer required. A semidomain $S$ is additively reduced if $0$ is the only invertible element of the monoid $(S,+)$, while $S$ is additively atomic if the monoid $(S,+)$ is atomic (i.e., every non-invertible element $s \in S$ can be written as the sum of finitely many irreducibles of $(S,+)$). In this paper, we describe the additively reduced and additively atomic semidomains $S$ for which every Laurent series $f \in S\llbracket x^{\pm 1} \rrbracket$ that is not a monomial can be written as the sum of at most three multiplicative irreducibles. In particular, we show that, for each $k \in \mathbb{N}$, every polynomial $f \in \mathbb{N}[x_1^{\pm 1}, \ldots, x_k^{\pm 1}]$ that is not a monomial can be written as the sum of two multiplicative irreducibles provided that $f(1, \ldots, 1) > 3$.
\end{abstract}

\maketitle

\setcounter{tocdepth}{1}
\tableofcontents

\section{Introduction}
\label{sec:intro}

The Goldbach conjecture, proposed by Goldbach in a letter to Euler in 1742, remains one of the oldest and most famous unsolved problems in mathematics. In modern terms, the conjecture states that every even integer greater than $2$ can be written as the sum of two prime numbers.

Since the 1960s, analogues of the Goldbach conjecture for classes of polynomial rings have been investigated by several authors. In 1965, Hayes~\cite{hayes} showed that polynomials $f \in \mathbb{Z}[x]$ with $\deg(f) > 1$ can be represented as the sum of two irreducibles. His work was later extended by Saidak~\cite{saidak}, Kozek~\cite{kozek}, and Pollack~\cite{pollack}. On the other hand, variations of the Goldbach conjecture for polynomials over finite fields have been studied by Effinger and Hayes~\cite{effinger}, Bender~\cite{bender}, Car and Gallardo~\cite{car}, and others. More recently, Paran~\cite{paran} examined a version of the Goldbach conjecture in the ring of formal power series over the integers.

Liao and the second author~\cite{liao} initiated the study of analogues of the Goldbach conjecture for polynomial semirings. Specifically, they showed that every $f \in \mathbb{N}[x^{\pm 1}]$ that is not a monomial can be written as the sum of two multiplicative irreducibles provided that $f(1) > 3$ \cite[Theorem~1]{liao}. It is worth mentioning that the proof uses a fact about the distribution of prime numbers in $\mathbb{Z}$. Note that this theorem is closely connected to the Goldbach conjecture. This result is about partitioning a fixed set of units into two subsets, where each subset represents an irreducible Laurent polynomial with positive integer coefficients, just as the Goldbach conjecture concerns the partitioning of a fixed set of units into two subsets, where each subset represents a positive prime. Moreover, if the Goldbach conjecture were true then, using the fact that $f \in \mathbb{N}[x^{\pm 1}]$ is a multiplicative irreducible when $f(1)$ is a prime number, one could prove that Laurent polynomials $f \in \mathbb{N}[x^{\pm 1}]$ for which $f(1)$ is an even number greater than $2$ can be written as the sum of two irreducibles.  In this paper, we provide a generalization of \cite[Theorem~1]{liao}, from which we recover the cited result without relying on specific information about prime numbers.

There are polynomial semirings for which no analogue of the Goldbach conjecture holds. For instance, consider the semiring $\mathbb{N}[\sqrt{2}][x^{\pm 1}]$. It is not difficult to show that if $\sqrt{2} = a + b$ for some $a, b \in \mathbb{N}[\sqrt{2}]$ then either $a = 0$ or $b = 0$. As a consequence, we obtain that for any $n \in \N$, the polynomial $\sqrt{2} x^n+\sqrt{2} x^{n-1}+\cdots +\sqrt{2} x + \sqrt{2}$ cannot be written as the sum of $n$ or fewer multiplicative irreducibles. A major goal of this article is to clarify the differences between semirings such as $\mathbb{N}[x^{\pm 1}]$ and $\mathbb{N}[\sqrt{2}][x^{\pm 1}]$ and explain how these differences are connected to analogues of the Goldbach conjecture for classes of Laurent polynomials with coefficients in a semiring.

Following~\cite{polo}, we say that a semidomain is a subsemiring of an integral domain. One can think of a semidomain as an integral domain in which additive inverses are no longer required. Clearly, the class of semidomains contains the class of integral domains. Classes of polynomial semidomains have been investigated before in the literature (see, for instance, \cite{chapmanpolo,CF19,goel}). Note that a semidomain $S$ consists of two monoids, namely $(S\setminus\{0\}, \cdot)$ and $(S,+)$. A semidomain $S$ is additively reduced if $0$ is the only invertible element of the monoid $(S,+)$, while $S$ is additively atomic if the monoid $(S,+)$ is atomic (i.e., every non-invertible element $s \in S$ can be written as the sum of finitely many irreducibles of $(S,+)$). It is easy to see that the set of nonnegative integers is an additively reduced and additively atomic semidomain.

This paper is structured as follows. In Section~\ref{sec:background}, we review some of the standard notation and terminology we use throughout the article. In Section~\ref{sec: Laurent polynomials}, we describe the additively reduced and additively atomic semidomains $S$ for which every Laurent polynomial $f \in S[x^{\pm 1}]$ that is not a monomial can be written as the sum of at most two multiplicative irreducibles, and we identify the ones that cannot be written as the sum of exactly two irreducibles. In particular, we show that for each $k \in \mathbb{N}$, every polynomial $f \in \mathbb{N}[x_1^{\pm 1}, \ldots, x_k^{\pm 1}]$ that is not a monomial can be written as the sum of two multiplicative irreducibles provided that $f(1, \ldots, 1) > 3$. In Section~\ref{sec:Lauren series}, we characterize the additively reduced and additively atomic semidomains $S$ for which every Laurent series $f \in S\llbracket x^{\pm 1} \rrbracket$ that is not a monomial can be written as the sum of at most three multiplicative irreducibles. For such an $S$, we show that every Laurent series that is not a polynomial can be written as the sum of at most three irreducibles in at least $2^{\aleph_0}$ ways.

\section{Preliminaries}
\label{sec:background}
We now review some of the standard notation and terminology that we will use later. For a comprehensive background on semiring theory, we recommend the monograph~\cite{JG1999}. Let $\mathbb{Q}$, $\mathbb{Z}$, $\mathbb{N}$, and $\mathbb{N}_0$ denote the set of rational numbers, integers, positive integers, and nonnegative integers, respectively. For $m,n \in \mathbb{N}_0$, let $\llbracket m,n \rrbracket \coloneqq \{k \in \mathbb{Z} \mid m \leq k \leq n\}$. Observe that if $m > n$, then $\llbracket m,n \rrbracket = \emptyset$. 

\subsection{Commutative monoids}

Throughout this paper, a \emph{monoid} is defined to be a semigroup with identity that is cancellative and commutative. Unless we specify otherwise, we will use multiplicative notation for monoids. For the rest of the section, let $M$ be a monoid. We use the notation $M^{\times}$ to denote the group of units (i.e., invertible elements) of $M$. We say that $M$ is \emph{reduced} provided that the group of units of $M$ is trivial. Given a subset $S$ of $M$, we let $\langle S \rangle$ denote the smallest submonoid of $M$ containing $S$. 

For elements $b,c \in M$, we say that $b$ \emph{divides} $c$ \emph{in} $M$ and write $b \mid_M c$ if there exists $b' \in M$ such that $c = b b'$. A submonoid $N$ of $M$ is \emph{divisor-closed} if for every $c \in N$ and $b \in M$ the relation $b \mid_M c$ implies that $b \in N$. Let $S$ be a nonempty subset of $M$. We use the term \emph{common divisor} of $S$ to refer to an element $d\in M$ that divides all elements of $S$. We call a common divisor $d$ of $S$ a \emph{greatest common divisor} if it is divisible by all other common divisors of $S$. We denote by $\gcd_M(S)$ the set consisting of all greatest common divisors of $S$ and drop the subscript when there is no risk of confusion.

An element $a \in M \setminus M^{\times}$ is called an \emph{atom} (or \emph{irreducible}) if for every $b, c \in M$ the equality $a = bc$ implies that either $b \in M^{\times}$ or $c \in M^{\times}$. We denote by the set of atoms of $M$ by $\mathcal{A}(M)$. We say that $M$ is \emph{atomic} if every element in $M \setminus M^{\times}$ can be written as a finite product of atoms. On the other hand, we say that $M$ is \emph{antimatter} if $\mathcal{A}(M) = \emptyset$. It is not hard to show that if $M$ is an atomic monoid that is antimatter, then $M$ is an abelian group.

\subsection{Semirings and Semidomains}
\smallskip

A \emph{semiring} $S$ is a (nonempty) set endowed with two binary operations denoted by `$\cdot$' and `$+$' and called \emph{multiplication} and \emph{addition}, respectively, such that the following conditions hold:
\begin{enumerate}
	\item $(S\setminus \{0\}, \cdot)$ is a commutative semigroup with an identity element denoted by $1$;
	
	\item $(S,+)$ is a monoid with its identity element denoted by $0$;
	
	\item $b \cdot (c+d)= b \cdot c + b \cdot d$ for all $b, c, d \in S$.
\end{enumerate}

\noindent We usually write $b c$ instead of $b \cdot c$ for elements $b,c$ in a semiring $S$. We would like to emphasize that a more general notion of a `semiring' does not usually assume the commutativity of the underlying multiplicative semigroup, but throughout this article we will assume that the multiplication operation is commutative. A subset $S'$ of a semiring $S$ is a \emph{subsemiring} of~$S$ if $(S',+)$ is a submonoid of $(S,+)$ that contains~$1$ and is closed under multiplication. Clearly, every subsemiring of $S$ is a semiring.

\begin{definition}
	A \emph{semidomain} is a subsemiring of an integral domain.
\end{definition}

Let $S$ be a semidomain. We say that $(S \setminus \{0\}, \cdot)$ is the \emph{multiplicative monoid} of $S$, and we denote it by $S^*$. Following standard notation from ring theory, we refer to the units of the multiplicative monoid $S^*$ simply as \emph{units}, and we let $S^\times$ denote the group of units of $S$. For $b,c \in S$ such that $b$ divides~$c$ in~$S^*$, we write $b \mid_S c$ (instead of $b \mid_{S^*} c$). Also, for a nonempty subset $B$ of $S$, we use $\gcd(B)$ to denote the set of greatest common divisors of $B$ in the monoid $S^*$. On the other hand, we denote the set of atoms of the multiplicative monoid $S^*$ by $\mathcal{A}(S)$ instead of $\mathcal{A}(S^*)$, while we denote the set of atoms of the additive monoid $(S,+)$ by $\mathcal{A}_{+}(S)$. 

A semidomain $S$ is \emph{additively reduced} if $0$ is the only invertible element of the monoid $(S,+)$. In this paper, we work with additively reduced semidomains $S$ for which $(S,+)$ is atomic and $\mathcal{A}_{+}(S) = S^{\times}$. In this context, we can write nonzero elements $s \in S^*$ as $s = u_s + v_s$, where $u_s \in S^{\times}$ and $v_s \in S$; observe that if $s \notin S^{\times}$ then $v_s \in S^*$. This class of semidomains includes familiar examples such as the semiring of nonnegative integers \( \mathbb{N}_0 \), which provides the natural framework for the Goldbach conjecture. Furthermore, this structural condition is preserved under passage to Laurent polynomial extensions: if a semidomain \( S \) satisfies the above condition then so does \( S[x^{\pm1}] \). We take advantage of this fact in Corollary~\ref{cor: Laurent polynomials with finitely many variables}. Finally, note that if $\mathcal{A}_{+}(S) \nsubseteq S^{\times}$, then for any $a \in \mathcal{A}_{+}(S) \setminus S^{\times}$ the polynomial $ax^2 + ax + a \in S[x^{\pm 1}]$ cannot be expressed as the sum of at most two multiplicative irreducibles.

\subsection{Laurent polynomials and Laurent series}

Let $R$ be an integral domain containing $S$ as a subsemiring. Then the \emph{semiring of polynomials $S[x]$ over~$S$} is a subsemiring of $R[x]$, and so $S[x]$ is also a semidomain. The elements of $S[x]$ are also polynomials in $R[x]$. As a result, all the standard terminology for polynomials can be applied to elements of $S[x]$, including \emph{degree} and \emph{leading coefficient}. Similarly, the \emph{semiring of Laurent series with finitely many negative terms $S\llbracket x^{\pm 1}\rrbracket$ over~$S$} is also a semidomain. Throughout the paper, when we refer to Laurent series, we refer only to Laurent series with finitely many negative terms. For the rest of this section, let $S$ be an additively reduced semidomain. The semiring of Laurent polynomials $S[x^{\pm 1}]$ over $S$ is embedded in $S\llbracket x^{\pm 1} \rrbracket$. In fact, since $S$ is additively reduced, the multiplicative monoid $S[x^{\pm 1}]^*$ is a divisor-closed submonoid of $S\llbracket x^{\pm 1} \rrbracket^*$. Consequently, the inclusion $\mathcal{A}(S[x^{\pm 1}]) \subseteq \mathcal{A}(S\llbracket x^{\pm 1} \rrbracket)$ holds. Moreover, it is not hard to see that
\[
	S\left\llbracket x^{\pm 1} \right\rrbracket^{\times} = S\left[x^{\pm 1}\right]^{\times} = \left\{sx^k \mid s \in S^{\times} \text{ and } k \in \mathbb{Z}\right\}.
\]
Finally, given a Laurent series $f = \sum_{i = 0}^{\infty} s_i x^{k_i} \in S\llbracket x^{\pm 1}\rrbracket$, we refer to the set $\{k_i \mid s_i \neq 0, \, i \in \mathbb{N}_0\}$ as the \emph{support} of $f$, which we denote by $\supp(f)$.

\section{Laurent polynomials as the sum of two irreducibles}
\label{sec: Laurent polynomials}

In this section, we describe the additively reduced and additively atomic semidomains $S$ for which every Laurent polynomial $f \in S[x^{\pm 1}]$ that is not a monomial can be written as the sum of at most two multiplicative irreducibles, and we identify the ones that cannot be written as the sum of exactly two irreducibles. In particular, we show that, for each $k \in \mathbb{N}$, every polynomial $f \in \mathbb{N}_0[x_1^{\pm 1}, \ldots, x_k^{\pm 1}]$ that is not a monomial can be written as the sum of two multiplicative irreducibles provided that $f(1, \ldots, 1) > 3$. This result generalizes~\cite[Theorem 1]{liao}.

Throughout this section, whenever we consider a polynomial expression $\sum_{i = 0}^{n} s_i x^{k_i} \in S[x^{\pm 1}]$, we assume that $k_0 > \cdots > k_n$ and $s_0, \ldots, s_n \in S^*$ unless we specify otherwise. Following~\cite{chapmanpolo}, we say that a nonzero polynomial $f \in S[x^{\pm 1}]$ is \emph{monolithic} if $f = gh$ implies that either $g$ or $h$ is a monomial in $S[x^{\pm 1}]$. Our next lemma explains the connection between monolithic and irreducible polynomials. Recall that for a nonempty subset $B$ of a semidomain $S$, $\gcd(B)$ is a (possibly empty) subset of $S$.

\begin{lemma} \label{lemma: monolithic and gcd 1 implies irreducibility}
	Let $f = \sum_{i = 0}^{n} s_ix^{k_i} \in S[x^{\pm 1}]$ such that $|\supp(f)| > 1$. Then $f$ is multiplicative irreducible in $S[x^{\pm 1}]$ if and only if $f$ is monolithic and $1 \in \gcd(s_0, \ldots, s_n)$. 
\end{lemma}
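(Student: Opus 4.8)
The plan is to prove the two implications separately, relying throughout on the description of the units $S[x^{\pm 1}]^{\times} = \{sx^k \mid s \in S^{\times},\, k \in \mathbb{Z}\}$ recorded in Section~\ref{sec:background}, together with the elementary observation that, because $|\supp(f)| > 1$, the polynomial $f$ is not a monomial and hence is not a unit (so that $f$ is eligible to be an atom).

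For the forward direction, suppose $f$ is irreducible. Monolithicity is then immediate: if $f = gh$, irreducibility forces $g$ or $h$ to be a unit, and every unit is a monomial, so one of the two factors is a monomial. To obtain $1 \in \gcd(s_0, \ldots, s_n)$, I would first note that $1$ is trivially a common divisor of the coefficients, so it remains only to check that $1$ is divisible by every common divisor. Given a common divisor $c$ of $s_0, \ldots, s_n$, write $s_i = c\,s_i'$ and set $g = \sum_{i=0}^{n} s_i' x^{k_i}$, so that $f = c\,g$. Since $g$ has the same support as $f$, it is not a monomial and hence not a unit; irreducibility of $f$ then forces $c$ to be a unit, i.e. $c \mid_S 1$. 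Thus $1$ is divisible by every common divisor, and therefore $1 \in \gcd(s_0, \ldots, s_n)$.

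For the converse, assume $f$ is monolithic with $1 \in \gcd(s_0, \ldots, s_n)$, and suppose $f = gh$. Monolithicity lets me assume without loss of generality that $g$ is a monomial, say $g = t x^m$ with $t \in S^*$. Expanding $gh$ shows that every coefficient of $f$ is a multiple of $t$, so $t$ is a common divisor of $s_0, \ldots, s_n$. Because $1$ is a greatest common divisor, $t \mid_S 1$, so $t$ is a unit and hence $g = t x^m$ is a unit. Since $f$ is a non-unit, this shows every factorization of $f$ has a unit factor, so $f$ is irreducible.

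The argument is largely bookkeeping, and the one point that needs care is the correct reading of the greatest common divisor condition. The definition in Section~\ref{sec:background} declares a common divisor to be greatest exactly when it is divisible by all other common divisors, so the statement $1 \in \gcd(s_0, \ldots, s_n)$ is precisely the assertion that every common divisor of the coefficients is a unit. Identifying this primitivity condition with the absence of a nontrivial constant factor, and verifying that the companion factor $g = \sum_{i=0}^{n} s_i' x^{k_i}$ genuinely fails to be a unit (which is exactly where the hypothesis $|\supp(f)| > 1$ enters), are the steps most likely to require explicit justification.
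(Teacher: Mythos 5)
Your proposal is correct and follows essentially the same route as the paper: the converse is the identical argument (monolithicity reduces to a monomial factor, whose coefficient must be a unit by the gcd condition), and your forward direction simply fills in the details that the paper compresses into ``follows from the description of the units,'' including the factor-out-a-common-divisor step showing primitivity. No gaps.
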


\begin{proof}
	The direct implication follows from the fact that $S[x^{\pm 1}]^{\times} = \{sx^k \mid s \in S^{\times} \text{ and } k \in \mathbb{Z}\}$. Suppose now that $f$ is monolithic and $1 \in \gcd(s_0, \ldots, s_n)$. Let $g, h \in S[x^{\pm 1}]$ such that $f = gh$. Since $f$ is monolithic, without loss of generality we may assume that $g$ is a monomial. Thus we can write $g = sx^k$ for some $s \in S$ and $k \in \mathbb{Z}$. Since $1 \in \gcd(s_0, \ldots, s_n)$, we have that $s \in S^{\times}$. Therefore $g$ is a unit of $S[x^{\pm 1}]$, which implies that $f$ is irreducible.
\end{proof}

Observe that being monolithic is a relaxation of the property of being irreducible. Next we introduce sufficient conditions under which a polynomial is monolithic. The following two lemmas will play an important role in the proof of the main result of this section. 

\begin{lemma} \label{lemma: all elements are in the lower half}
	Let $S$ be an additively reduced semidomain, and let $f = \sum_{i = 0}^{n} s_ix^{k_i} \in S[x^{\pm 1}]$. The following statements hold.
	\begin{enumerate}
		\item If $|\supp(f)| \geq 2$ and $k_1 < \frac{k_0 + k_n}{2}$ then $f$ is monolithic.
		\item If $|\supp(f)| > 3$ and $k_1 \leq \frac{k_0 + k_n}{2}$ then $f$ is monolithic.
	\end{enumerate}
\end{lemma}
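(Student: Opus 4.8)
The plan is to prove both statements by contradiction, using the multiplicative structure of a hypothetical factorization together with the position of the second-highest exponent $k_1$ relative to the midpoint $\frac{k_0+k_n}{2}$. Suppose $f$ is not monolithic, so we can write $f = gh$ where neither $g$ nor $h$ is a monomial. After multiplying by suitable units $sx^k \in S[x^{\pm1}]^\times$ (which does not affect whether a factor is a monomial), I would normalize $g$ and $h$ so that both have their lowest-degree terms at exponent $0$; this lets me treat $g$ and $h$ as honest polynomials in $S[x]$ with nonzero constant terms. Writing $g = \sum a_i x^{m_i}$ and $h = \sum b_j x^{\ell_j}$ with top exponents $m_g$ and $\ell_h$ and both bottom exponents equal to $0$, the extreme exponents of $f = gh$ are $k_0 = m_g + \ell_h$ and $k_n = 0$. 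The crucial observation is that because $S$ is additively reduced, there is no additive cancellation: the coefficient of $x^{m_g+\ell_h}$ is $a_{m_g} b_{\ell_h} \neq 0$ (so this term genuinely appears and equals the top term of $f$), and the coefficient of $x^0$ is $a_0 b_0 \neq 0$.

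The key step is to locate the second-highest exponent of $f$. Since both $g$ and $h$ are non-monomials, each has a strictly-second-highest exponent: let $m'$ be the largest exponent of $g$ strictly below $m_g$, and $\ell'$ the largest exponent of $h$ strictly below $\ell_h$. The term of $f$ with the second-largest possible exponent comes from the two products $x^{m'} \cdot x^{\ell_h}$ and $x^{m_g} \cdot x^{\ell'}$, giving exponent $\max\{m'+\ell_h,\ m_g+\ell'\}$; again by additive reducedness no cancellation can kill it, so this is exactly $k_1$. Hence
\[
    k_1 = \max\{m' + \ell_h,\ m_g + \ell'\} \geq \frac{(m'+\ell_h)+(m_g+\ell')}{2} = \frac{k_0}{2} + \frac{m'+\ell'}{2} \geq \frac{k_0}{2},
\]
using $m',\ell' \geq 0$. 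Since $k_n = 0$, this reads $k_1 \geq \frac{k_0+k_n}{2}$, which already contradicts the hypothesis $k_1 < \frac{k_0+k_n}{2}$ of part (1). This finishes (1) immediately.

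For part (2), the hypothesis is the weaker $k_1 \leq \frac{k_0+k_n}{2}$, so I must rule out equality under the extra assumption $|\supp(f)| > 3$. Equality in the chain above forces $m' = \ell' = 0$ and $m'+\ell_h = m_g+\ell'$, i.e. $\ell_h = m_g$, with both $g$ and $h$ having exactly one intermediate exponent, namely $0$ itself — so $g$ and $h$ are each \emph{binomials} of the shape $a_{m_g}x^{m_g} + a_0$ and $b_{m_g}x^{m_g} + b_0$ (same top exponent $m_g=\ell_h$). But then $f = gh$ is supported only on $\{2m_g, m_g, 0\}$, giving $|\supp(f)| \leq 3$, contradicting $|\supp(f)| > 3$. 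Thus equality is impossible and $f$ is monolithic. The main obstacle is the bookkeeping in the second-highest-term argument: one must be careful that additive reducedness is what guarantees each relevant coefficient product is nonzero (so that $k_0$ and $k_1$ are genuinely the top two exponents and the maximum above is attained rather than cancelled), and that the normalization to constant-term-nonzero polynomials is legitimate; once these are pinned down, both parts follow from tracking exponents of the extremal cross-terms.
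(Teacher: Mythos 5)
Your proof is correct and follows essentially the same route as the paper's: both arguments normalize so that $g,h \in S[x]$ have nonzero constant terms, use additive reducedness to see that the relevant cross-terms survive in $\supp(f)$, deduce $k_1 \geq k_0/2$ from $\deg(g)+\deg(h)=k_0$, and for part (2) show that equality forces both factors to be binomials of equal degree, whence $|\supp(f)| \leq 3$. The only cosmetic difference is that you certify the bound through the second-highest exponents $m',\ell'$ of the factors and an averaging step, whereas the paper works directly with $\deg(g),\deg(h) \in \supp(f)\setminus\{k_0\}$.
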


\begin{proof}
Scaling $f$ by an appropriate power of $x$, we may assume that $k_n = 0$.  We argue by contradiction. Suppose that $f$ is not monolithic, that is, $f = gh$, where neither $g$ nor $h$ is a monomial in $S[x^{\pm 1}]$. 
Scaling by appropriate powers of $x$, we may also assume that $g$ and $h$ are elements of $S[x]$ satisfying $g(0)h(0) \neq 0$. 

Suppose that $k_1 < \frac{k_0}{2}$. Let $s$ and $s'$ be the leading coefficients of $g$ and $h$, respectively.  Since $S$ is additively reduced and $h(0) \neq 0$, we see that $\supp(g) \subseteq \supp(gh)$. Since $h$ is not a monomial, $\deg(gh) > \deg(g)$. We conclude that $\deg(g) \in \{k_1, \ldots, k_n\}$.  Consequently, $\deg(g) \le k_1 < \frac{k_0}{2}$. 
A similar argument implies that $\deg(h) < \frac{k_0}{2}$. Since $k_0 = \deg(f) = \deg(gh)$, this is a contradiction. We conclude that $f$ is monolithic.  This completes the proof that statement $(1)$ holds.
	
	We now prove statement $(2)$.  Without loss of generality, we may assume that $\ k_1 = \frac{k_0}{2}$ and $\deg(g) \ge \deg(h)$.
This implies that $\deg(g) \geq \frac{k_0}{2}$. Since $S$ is additively reduced, $h$ is not a monomial, and $h(0) \neq 0$, we must have $\deg(g) = \frac{k_0}{2}$. Hence $\deg(h) = \frac{k_0}{2}$. Since $|\supp(f)| > 3$, either $g$ or $h$ is not a binomial, which implies that $k_1 > \frac{k_0}{2}$.  This is a contradiction and so statement $(2)$ holds.
\end{proof}

\begin{lemma} \label{lemma: all elements are in the upper half}
	Let $S$ be an additively reduced semidomain, and let $f = \sum_{i = 0}^{n} s_ix^{k_i} \in S[x^{\pm 1}]$. The following statements hold.
	\begin{enumerate}
		\item If $|\supp(f)| \geq 2$ and $k_{n - 1} > \frac{k_0 + k_n}{2}$ then $f$ is monolithic.
		\item If $|\supp(f)| > 3$ and $k_{n - 1} \geq \frac{k_0 + k_n}{2}$ then $f$ is monolithic.
	\end{enumerate}
\end{lemma}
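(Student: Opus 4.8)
The plan is to prove Lemma~\ref{lemma: all elements are in the upper half} by reducing it to the already-established Lemma~\ref{lemma: all elements are in the lower half} via a symmetry argument, rather than repeating the factorization analysis. The key observation is that the two lemmas are mirror images of one another: Lemma~\ref{lemma: all elements are in the lower half} constrains the \emph{second-highest} exponent $k_1$ relative to the midpoint $\frac{k_0+k_n}{2}$, while Lemma~\ref{lemma: all elements are in the upper half} constrains the \emph{second-lowest} exponent $k_{n-1}$ relative to the same midpoint. This symmetry is exactly the reflection $x \mapsto x^{-1}$.

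First I would introduce the reflected polynomial. Given $f = \sum_{i=0}^{n} s_i x^{k_i}$ with $k_0 > \cdots > k_n$, define $\tilde{f} = \sum_{i=0}^{n} s_i x^{-k_i}$, obtained by substituting $x^{-1}$ for $x$. Reindexing so that exponents are written in decreasing order, the new exponents are $-k_n > -k_{n-1} > \cdots > -k_0$, so the top exponent of $\tilde f$ is $-k_n$, the second-from-top is $-k_{n-1}$, and the bottom is $-k_0$. I would then verify two routine facts: that $\tilde{f}$ lies in $S[x^{\pm 1}]$ with $|\supp(\tilde f)| = |\supp(f)|$, and that $f$ is monolithic if and only if $\tilde f$ is monolithic. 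The latter holds because the substitution $x \mapsto x^{-1}$ is an automorphism of the semidomain $S[x^{\pm 1}]$ that sends monomials to monomials; hence it preserves nontrivial factorizations and therefore the monolithic property.

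With this setup, the hypotheses translate cleanly. The condition $k_{n-1} > \frac{k_0 + k_n}{2}$ on $f$ becomes, after negating, $-k_{n-1} < \frac{-k_0 - k_n}{2} = \frac{(-k_n) + (-k_0)}{2}$, which is precisely the hypothesis of Lemma~\ref{lemma: all elements are in the lower half}(1) applied to $\tilde f$ (its top exponent being $-k_n$ and its bottom exponent being $-k_0$, with second-from-top exponent $-k_{n-1}$). Likewise, the weak inequality $k_{n-1} \geq \frac{k_0+k_n}{2}$ together with $|\supp(f)| > 3$ translates to the hypothesis of Lemma~\ref{lemma: all elements are in the lower half}(2) for $\tilde f$. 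Applying Lemma~\ref{lemma: all elements are in the lower half} then gives that $\tilde f$ is monolithic, and transporting back through the reflection shows $f$ is monolithic.

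I expect the main subtlety to be purely bookkeeping: one must be careful that the reflection genuinely reverses the \emph{order} of the exponents, so that the role of $k_1$ (second-largest) in the lower-half lemma is played by $-k_{n-1}$ (second-largest among the reflected exponents), and that the midpoint is preserved under negation. There is no real analytic or combinatorial obstacle here, since all the hard factorization work already resides in Lemma~\ref{lemma: all elements are in the lower half}. If one preferred to avoid the symmetry argument entirely, an alternative is to copy the degree-counting proof of Lemma~\ref{lemma: all elements are in the lower half} verbatim but track the \emph{order} (lowest exponent) of the factors $g$ and $h$ instead of their degrees, using that $\supp(g) \subseteq \supp(gh)$ still holds because $S$ is additively reduced; this is a direct but less economical route.
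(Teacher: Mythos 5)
Your reduction is correct, and it is a genuinely different route from the one the paper takes. The paper proves Lemma~\ref{lemma: all elements are in the upper half} from scratch by mirroring the degree-counting argument of Lemma~\ref{lemma: all elements are in the lower half}: after normalizing $k_n = 0$ and writing $f = gh$ with $g(0)h(0) \neq 0$, it uses the additively reduced hypothesis to bound the degrees of $g$ and $h$ from \emph{below} by $k_{n-1}$ (rather than from above by $k_1$), reaching the same contradiction with $\deg(g) + \deg(h) = k_0$. You instead transport the statement through the substitution $x \mapsto x^{-1}$, which is indeed a semiring automorphism of $S[x^{\pm 1}]$ sending monomials to monomials, hence preserving the monolithic property and the size of the support; your bookkeeping of the reflected exponents ($-k_n > -k_{n-1} > \cdots > -k_0$, so the second-largest reflected exponent is $-k_{n-1}$ and the midpoint is preserved under negation) is accurate, and both hypotheses translate exactly as you claim. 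Your approach buys economy and makes the symmetry between the two lemmas explicit, at the cost of having to justify that monolithicity is invariant under the automorphism (a one-line check, but one the paper never needs to make); the paper's direct argument is self-contained and keeps both proofs in the same elementary degree-counting style. Your closing remark about tracking orders instead of degrees is essentially what the paper actually does, so either of your two suggested routes lands on valid ground.
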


\begin{proof}
We follow the same strategy as in the proof of Lemma \ref{lemma: all elements are in the lower half}.  Scaling $f$ by an appropriate power of $x$, we may assume that $k_n = 0$.  
Suppose that $f = gh$ 
where neither $g$ nor $h$ is a monomial in $S[x^{\pm 1}]$.  Scaling by appropriate powers of $x$, we may also assume that $g$ and $h$ are elements of $S[x]$ satisfying $g(0)h(0) \neq 0$. 

Suppose that $k_{n - 1} > \frac{k_0}{2}$. Let $d =\deg(g)$ and $d'=\deg(h)$. Since $S$ is additively reduced and $g(0)h(0) \neq 0$, we see that $\min(d,d') \ge k_{n-1} > \frac{k_0}{2}$. This contradicts the fact that $d + d' = k_0$, and we conclude that statement (1) holds.
	
We now prove statement $(2)$. We may assume that $k_{n - 1} = \frac{k_0}{2}$. Observe that $\deg(g) \geq \frac{k_0}{2}$ since $S$ is additively reduced and $h(0) \neq 0$. Similarly, $\deg(h) \geq \frac{k_0}{2}$. Consequently, we must have $\deg(g) = \deg(h) = \frac{k_0}{2}$. Since $|\supp(f)| > 3$, either $g$ or $h$ is not a binomial, which implies that $k_{n - 1} < \frac{k_0}{2}$.  This is a contradiction, and therefore statement $(2)$ holds. 
\end{proof}

Now we are in a position to describe the additively reduced and additively atomic semidomains $S$ for which the Laurent polynomial extension $S[x^{\pm 1}]$ satisfies an analogue of the Goldbach conjecture. We will deal with binomials and trinomials separately, and then address the general case.

\begin{lemma} \label{lemma: binomials that can be written as the sum of two irreducibles}
	Let $S$ be an additively reduced and additively atomic semidomain for which $\mathcal{A}_{+}(S) = S^{\times}$\!. Suppose $f \in S[x^{\pm 1}]$ has $|\supp(f)| = 2$. The following statements hold.
	\begin{enumerate}
		\item If $f$ is not of the form $f = ax^{k_0} + bx^{k_1}$, where either $a \in S^{\times}\!$ or $b \in S^{\times}$ then $f$ is the sum of two multiplicative irreducibles. 
		\item If $f = ax^{k_0} + bx^{k_1}$, where either $a \in S^{\times}\!$ or $b \in S^{\times}$ then $f$ is irreducible.
	\end{enumerate} 
\end{lemma}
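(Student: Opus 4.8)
The plan is to handle the two statements by reducing everything to the structure of $S^*$ and then invoking Lemma~\ref{lemma: monolithic and gcd 1 implies irreducibility}. Throughout, write $f = ax^{k_0} + bx^{k_1}$ with $k_0 > k_1$ and $a,b \in S^*$. Since $|\supp(f)| = 2$ with only two terms, $f$ is automatically monolithic: if $f = gh$ with neither factor a monomial, then both $g$ and $h$ have at least two terms, and because $S$ is additively reduced the number of support elements of $gh$ is at least $|\supp(g)| + |\supp(h)| - 1 \geq 3$, contradicting $|\supp(f)| = 2$. (This is the binomial case of the reasoning underlying Lemmas~\ref{lemma: all elements are in the lower half} and \ref{lemma: all elements are in the upper half}, and I would record it explicitly here.) So for a binomial, irreducibility is governed entirely by the second condition in Lemma~\ref{lemma: monolithic and gcd 1 implies irreducibility}, namely whether $1 \in \gcd(a,b)$.

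For statement $(2)$, suppose without loss of generality that $a \in S^\times$. Then $a$ is a common divisor of $\{a,b\}$ that is a unit, so $1 = a^{-1}a \mid_S a$ and $1 \mid_S b$ trivially; since every common divisor divides the unit-multiple appropriately, one checks that $1 \in \gcd(a,b)$. By Lemma~\ref{lemma: monolithic and gcd 1 implies irreducibility}, $f$ is irreducible, giving $(2)$.

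For statement $(1)$, the hypothesis is that neither $a$ nor $b$ is a unit. The idea is to split one coefficient using the additive structure. Because $S$ is additively reduced, additively atomic, and $\mathcal{A}_+(S) = S^\times$, every nonzero $s \in S^*$ can be written as $s = u_s + v_s$ with $u_s \in S^\times$ and $v_s \in S$ (as noted in the preliminaries), and if $s \notin S^\times$ then $v_s \in S^*$. Applying this to $a$, write $a = u + v$ with $u \in S^\times$ and $v \in S^*$ (using $a \notin S^\times$). Then decompose
\[
	f = (u x^{k_0} + b x^{k_1}) + v x^{k_0}.
\]
The first summand $g := u x^{k_0} + b x^{k_1}$ is a binomial with a unit leading coefficient, hence irreducible by part $(2)$; the second summand is a monomial and so is not irreducible, which means this particular decomposition does not yet exhibit $f$ as a sum of two \emph{irreducibles}. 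The genuine construction must instead split \emph{both} coefficients so that each of the two resulting binomials has gcd $1$ and is monolithic. Writing $a = u_a + v_a$ and $b = u_b + v_b$ with $u_a, u_b \in S^\times$ and $v_a, v_b \in S^*$, set
\[
	g = u_a x^{k_0} + v_b x^{k_1}, \qquad h = v_a x^{k_0} + u_b x^{k_1},
\]
so that $f = g + h$. Each of $g$ and $h$ is a genuine binomial (all four coefficients are nonzero) with a unit coefficient, hence irreducible by part $(2)$.

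The main obstacle is verifying that this two-way split is actually available, i.e.\ that $a$ and $b$ each decompose as a unit plus a nonzero element. For $a$ and $b$ both non-units this is immediate from $\mathcal{A}_+(S) = S^\times$ together with additive atomicity, since a non-unit nonzero element is a sum of at least two atoms (units), letting us peel off one unit and keep a nonzero remainder. The delicate point is the excluded case in statement $(1)$: the hypothesis excludes $f$ having \emph{either} coefficient a unit precisely because if, say, $a \in S^\times$, the term $a x^{k_0}$ cannot be split into a unit plus a nonzero element while keeping the complementary binomial a genuine two-term polynomial with nonzero coefficients — any such split would force $v_a = 0$ and collapse one summand to a monomial, which is why those binomials are irreducible rather than decomposable and belong to case $(2)$. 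I would close by confirming $g, h \notin S[x^{\pm 1}]^\times$ (they are not monomials, hence not units by the description of $S[x^{\pm 1}]^\times$ in the preliminaries), so that $f = g + h$ is a sum of two multiplicative irreducibles, completing $(1)$.
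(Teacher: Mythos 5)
Your proposal is correct and follows essentially the same route as the paper: you decompose each non-unit coefficient as a unit plus a nonzero element using $\mathcal{A}_{+}(S) = S^{\times}$ and additive atomicity, and cross-combine to get $f = \left[u_a x^{k_0} + v_b x^{k_1}\right] + \left[v_a x^{k_0} + u_b x^{k_1}\right]$, which is exactly the paper's decomposition. The only difference is expository: you spell out why binomials are monolithic (via the sumset bound on supports) and why $1 \in \gcd(a,b)$ when a coefficient is a unit, details the paper leaves to Lemma~\ref{lemma: monolithic and gcd 1 implies irreducibility} and to the reader.
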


\begin{proof}
	We can write $f = ax^{k_0} + bx^{k_1}$ for some $a, b \in S^*$ and $k_0, k_1 \in \zz$. Since $(S,+)$ is atomic and $\mathcal{A}_{+}(S) = S^{\times}$, we can write $a = u_a + v_a$ and $b = u_b + v_b$, where $u_a, u_b \in S^{\times}$. If either $v_a = 0$ or $v_b = 0$ then $f$ is irreducible, and statement $(2)$ immediately follows. On the other hand, if $v_a v_b \neq 0$ then we can write
	\[
	f = \left[u_a x^{k_0} + v_b x^{k_1}\right] +  \left[v_a x^{k_0} + u_b x^{k_1}\right]\!,
	\]
	where the summands between brackets are irreducible by Lemma~\ref{lemma: monolithic and gcd 1 implies irreducibility}. Therefore statement $(1)$ holds. 
\end{proof}

\begin{lemma} \label{lemma: trinomials can be written as the sum of two irreducibles}
	Let $S$ be an additively reduced and additively atomic semidomain for which $\mathcal{A}_{+}(S) = S^{\times}$\!. Suppose $f \in S[x^{\pm 1}]$ has $|\supp(f)| = 3$. Then $f$ can be written as the sum of two multiplicative irreducibles unless $f$ is the sum of three units, in which case $f$ is irreducible. 
\end{lemma}

\begin{proof}
	Write $f = ax^{k_0} + bx^{k_1} + cx^{k_2}$ with $abc \neq 0$, and suppose that $f$ is not the sum of three units. We split our reasoning into the following two cases.
	
	\smallskip
	\noindent\textsc{Case 1:} $b \in S^{\times}\!$. Since $f$ is not the sum of three units, either $a \notin S^{\times}$ or $c \notin S^{\times}$. Suppose that $a \notin S^{\times}\!$. Since $(S,+)$ is atomic and $\mathcal{A}_{+}(S) = S^{\times}$, we can write $a = u_a + v_a$, where $u_a \in S^{\times}$ and $v_a \in S^*$. 
	Thus,
	\[
	f = \left[u_ax^{k_0} + cx^{k_2}\right] + \left[v_ax^{k_0} + bx^{k_1}\right].
	\]
	Since $b \in S^\times$, each binomial between brackets is irreducible by Lemma~\ref{lemma: monolithic and gcd 1 implies irreducibility}. The argument when $c \notin S^{\times}$ is identical.
	
	\smallskip
	\noindent\textsc{Case 2:} $b \notin S^{\times}$. We can write $b = u_b + v_b$ where $u_b \in S^{\times}$ and $v_b \in S^*$, and we can write $a = u_a + v_a$ where $u_a \in S^{\times}$ and $v_a \in S$. Thus,
	\begin{equation} \label{eq: second term is irreducible}
		f = \left[u_ax^{k_0} + v_bx^{k_1}\right] + \left[v_ax^{k_0} + u_bx^{k_1} + cx^{k_2}\right].
	\end{equation}
	The first summand in Equation~\eqref{eq: second term is irreducible} is irreducible as $v_b \in S^*$. If $v_a = 0$ then the second summand is also irreducible, which concludes the proof. Therefore, we suppose that $v_a \neq 0$.
	
    We argue by contradiction.  Suppose that $g = v_ax^{k_0} + u_bx^{k_1} + cx^{k_2}$ is reducible. Then $g$ is the product of two binomials. 
This implies that $u_b = s + s'$ for some $s,s' \in S^*$. Consequently, we have that $1 = u_b^{-1}s + u_b^{-1}s'$ which, in turn, implies that $\mathcal{A}_{+}(S) = \emptyset$. Indeed, every element $a \in S^*$ can be written as $a = au_b^{-1}s + au_b^{-1}s'$ for nonzero summands $au_b^{-1}s$ and $au_b^{-1}s'$. However, this contradicts the assumption that $1 \in \mathcal{A}_{+}(S)$. Therefore $g$ is irreducible, completing the proof in this case.

Arguing as in the proof of \textsc{Case 2}, if $f$ is the sum of three units then it is irreducible, completing the proof.
\end{proof}

Next we describe the additively reduced and additively atomic semidomains $S$ for which the Laurent polynomial extension $S[x^{\pm 1}]$ satisfies an analogue of the Goldbach conjecture. 

\begin{theorem} \label{theorem: our principal result}
	Let $S$ be an additively reduced and additively atomic semidomain. 
    \begin{enumerate}[leftmargin=*,label=(\Alph*)]
    \item    The following statements are equivalent:
	\begin{enumerate}[label=(\arabic*)]
		\item $\mathcal{A}_{+}(S) = S^{\times}$\!;
		
		\smallskip
		\item every $f \in S[x^{\pm 1}]$ with $|\supp(f)| > 1$ can be expressed as the sum of at most two multiplicative irreducibles;
		
		\smallskip
		\item there exists $k \in \nn$ such that every $f \in S[x^{\pm 1}]$ with $|\supp(f)| > 1$ can be expressed as the sum of at most $k$ multiplicative irreducibles.
	\end{enumerate}
	\smallskip
\item  Moreover, suppose any of the statements in (A) holds and $f \in S[x^{\pm 1}]$ does not have one of the following forms:
\begin{enumerate}
	\item[(a)] $f = ax^{k_0} + bx^{k_1}$\!, where either $a \in S^{\times}$\! or $b \in S^{\times}$\!;
	\item[(b)] $f= ax^{k_0} + bx^{k_1} + cx^{k_2}$, where $a,b,c \in S^{\times}$.
\end{enumerate}
Then $f$ is the sum of exactly two irreducible polynomials in $S[x^{\pm 1}]$.
\end{enumerate}
\end{theorem}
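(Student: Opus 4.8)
The plan is to prove the equivalence (A) by establishing the cycle of implications $(1) \Rightarrow (2) \Rightarrow (3) \Rightarrow (1)$, and then to deduce part (B) from the lemmas already proved. The implication $(2) \Rightarrow (3)$ is trivial (take $k = 2$), so the real content lies in $(1) \Rightarrow (2)$ and in the contrapositive direction $(3) \Rightarrow (1)$.

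For $(1) \Rightarrow (2)$, I would assume $\mathcal{A}_{+}(S) = S^{\times}$ and argue by induction on $|\supp(f)|$. The base cases $|\supp(f)| = 2$ and $|\supp(f)| = 3$ are handled directly by Lemma~\ref{lemma: binomials that can be written as the sum of two irreducibles} and Lemma~\ref{lemma: trinomials can be written as the sum of two irreducibles}. For $|\supp(f)| > 3$, the strategy is to split $f = \sum_{i=0}^{n} s_i x^{k_i}$ into two pieces by partitioning its support, choosing the cut so that each piece is forced to be monolithic by Lemma~\ref{lemma: all elements are in the lower half} or Lemma~\ref{lemma: all elements are in the upper half}. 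Concretely, I would write each coefficient $s_i = u_i + v_i$ with $u_i \in S^{\times}$ (using $\mathcal{A}_{+}(S) = S^{\times}$ and additive atomicity), and then distribute the "unit parts" and "remainder parts" across the two summands so that the exponents in each summand cluster entirely in the lower or upper half of the interval $[k_n, k_0]$. The point of the strict-versus-nonstrict dichotomy in the two monolithic lemmas (statement (1) needs a strict inequality but only $|\supp| \geq 2$; statement (2) allows equality but needs $|\supp| > 3$) is precisely to guarantee that, whatever the geometry of $\supp(f)$, one can place the cut so that both halves satisfy the hypotheses of a monolithic lemma. I would ensure each summand additionally has a unit coefficient at an appropriate exponent, so that $1 \in \gcd$ of its coefficients and Lemma~\ref{lemma: monolithic and gcd 1 implies irreducibility} upgrades "monolithic" to "irreducible."

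For $(3) \Rightarrow (1)$, I would argue contrapositively: if $\mathcal{A}_{+}(S) \neq S^{\times}$, I must produce, for every $k$, a Laurent polynomial with $|\supp| > 1$ that is not a sum of $k$ or fewer irreducibles. The remark at the end of Section~\ref{sec:background} already exhibits the obstruction when $\mathcal{A}_{+}(S) \nsubseteq S^{\times}$: for $a \in \mathcal{A}_{+}(S) \setminus S^{\times}$, the polynomial $a x^2 + a x + a$ resists expression as a sum of few irreducibles, and one scales this to $a(x^k + \cdots + x + 1)$ to defeat any fixed $k$. The reverse containment $S^{\times} \subseteq \mathcal{A}_{+}(S)$ holds automatically in an additively reduced semidomain, so the failure of $(1)$ is exactly the failure $\mathcal{A}_{+}(S) \nsubseteq S^{\times}$, and the construction applies. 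The main obstacle I anticipate is the bookkeeping in $(1) \Rightarrow (2)$: verifying in every configuration of $\supp(f)$ that the chosen partition leaves each summand with its support strictly (or, when permitted, weakly) inside one half-interval while retaining a unit coefficient. This requires a careful case analysis according to where $k_1$ and $k_{n-1}$ sit relative to the midpoint $\frac{k_0 + k_n}{2}$.

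Once (A) is established, part (B) follows quickly. Under any of the equivalent conditions, $\mathcal{A}_{+}(S) = S^{\times}$, so the machinery of the three preceding lemmas applies. If $|\supp(f)| = 2$ and $f$ is not of form (a), Lemma~\ref{lemma: binomials that can be written as the sum of two irreducibles}(1) gives a decomposition into exactly two irreducibles. If $|\supp(f)| = 3$ and $f$ is not of form (b), Lemma~\ref{lemma: trinomials can be written as the sum of two irreducibles} gives exactly two irreducibles. If $|\supp(f)| > 3$, the inductive construction from $(1) \Rightarrow (2)$ produces two summands, each a genuine non-monomial irreducible, hence exactly two irreducibles. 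In every case the number of summands is exactly two rather than one, since $f$ itself is not excluded by being a monomial and the decompositions produced are honest two-term splits, which completes the proof.
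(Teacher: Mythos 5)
Your overall strategy matches the paper's: binomials and trinomials via the two dedicated lemmas, a lower-half/upper-half split of the support for $|\supp(f)|>3$ combined with the monolithic lemmas and the gcd criterion, and the $ax^{k}+\cdots+ax+a$ obstruction for $(3)\Rightarrow(1)$. However, there is one genuinely false step. In your treatment of $(3)\Rightarrow(1)$ you assert that ``the reverse containment $S^{\times}\subseteq\mathcal{A}_{+}(S)$ holds automatically in an additively reduced semidomain.'' This is not true: $\mathbb{Q}_{\geq 0}$ is additively reduced, yet $\mathcal{A}_{+}(\mathbb{Q}_{\geq 0})=\emptyset$, so no unit is an additive atom (this is precisely Example~\ref{ex: polynomials with coefficients in the positive cone of Q satisfy Goldbach} in the paper). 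The containment does hold under the theorem's hypotheses, but it requires both additive atomicity and an argument: if some $u\in S^{\times}$ satisfies $u=a+b$ with $a,b\in S^{*}$, then $1=u^{-1}a+u^{-1}b$, so every $s\in S^{*}$ decomposes as $su^{-1}a+su^{-1}b$ and hence $\mathcal{A}_{+}(S)=\emptyset$; an atomic antimatter monoid is a group, contradicting that $(S,+)$ is reduced. Without this step the implication $(3)\Rightarrow(1)$ is not established, since your construction only rules out $\mathcal{A}_{+}(S)\nsubseteq S^{\times}$.

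Separately, in $(1)\Rightarrow(2)$ you correctly identify the crux --- what to do when an exponent lands exactly at the midpoint $\frac{k_0+k_n}{2}$, where neither strict-inequality lemma applies --- but you defer it as ``bookkeeping'' rather than resolving it. The paper's resolution is not automatic: one attaches the midpoint term $s_jx^{k_j}$ to whichever half has support of size at least $3$ (so the weak-inequality versions of Lemmas~\ref{lemma: all elements are in the lower half} and~\ref{lemma: all elements are in the upper half} apply), and when both halves have support exactly $2$ one must separately handle the subcases $v_0=0$, $v_n=0$, and $v_0v_n\neq 0$ (the last forcing $|\supp(f)|=3$, a contradiction). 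There is also the degenerate case where one half collapses to a monomial, which requires a different two-term decomposition; your claim in part (B) that the construction always yields two non-monomial summands silently assumes this case away. These are fixable, but as written the central argument is incomplete at exactly the points where the work lies.
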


\begin{proof}
	$(2) \implies (3)\!:$ This is obvious. 
	
	$(3) \implies (1)\!:$ Since $S$ is additively reduced, if $a \in \mathcal{A}_{+}(S) \setminus S^{\times}$ then there is no constant $k \in \nn$ such that every element of $S[x^{\pm 1}]$ can be written as the sum of at most $k$ irreducibles. Indeed, for any $n \in \nn$, the polynomial $\sum_{i = 0}^n ax^{k_i}$ cannot be expressed as the sum of $n$ or fewer irreducibles. Consequently, the inclusion $\mathcal{A}_{+}(S) \subseteq S^{\times}$ holds. 
    
    We argue by contradiction. Let $u \in S^{\times}$\! and suppose that $u \notin \mathcal{A}_{+}(S)$. There exist $a,b\in S^*$ such that $u = a+b$.  This implies $1 = u^{-1}a + u^{-1}b$ which, in turn, implies that $\mathcal{A}_{+}(S) = \emptyset$. 
Since $(S,+)$ is atomic and antimatter, it must be an additive group. This contradicts the assumption that $S$ is an additively reduced semidomain. Therefore $\mathcal{A}_{+}(S) = S^{\times}$\!.
	
	$(1) \implies (2)\!:$ By virtue of Lemma~\ref{lemma: binomials that can be written as the sum of two irreducibles} and Lemma~\ref{lemma: trinomials can be written as the sum of two irreducibles}, we need only consider the case where $|\supp(f)| > 3$. Write $f = \sum_{i = 0}^n s_ix^{k_i}$, where $s_0, \ldots, s_n \in S^*$.  Scaling by an appropriate power of $x$, we may assume that 
$k_0 > \cdots > k_n = 0$. It is possible to write $s_0 = u_0 + v_0$ and $s_n = u_n + v_n$ with $u_0, u_n \in S^{\times}$\!. Let $f^* = \sum_{i = 0}^{n} s_i^* x^{k_i}$, where
	\[
		 s_i^* = \begin{cases} 
			v_{0} & i = 0,\\
			s_i & k_i > \frac{k_0}{2},\\
			u_n & i = n,\\ 
			0 & \text{otherwise}. 
		\end{cases}
	\] 
	Observe that $f^*$ is either a unit or an irreducible by Lemma~\ref{lemma: monolithic and gcd 1 implies irreducibility} and Lemma~\ref{lemma: all elements are in the upper half}. Similarly, let $f_{*} = \sum_{i = 0}^{n} t_{i}^* x^{k_i}$, where
	\[
	t_{i}^* = \begin{cases} 
		u_{0} & i = 0,\\
		s_i & k_i < \frac{k_0}{2},\\
		v_n & i = n,\\ 
		0 & \text{otherwise}. 
	\end{cases}
	\] 
	Again, $f_*$ is either a unit or an irreducible by Lemma~\ref{lemma: monolithic and gcd 1 implies irreducibility} and Lemma~\ref{lemma: all elements are in the lower half}. Note that since $|\supp(f)| > 3$, either $f^*$ or $f_*$ is not a unit. We split our reasoning into the following two cases.
	
	\smallskip
	\noindent\textsc{Case 1:} either $f^*$ or $f_*$ is a unit. Suppose that $f^*$ is a unit. In this case, we have that $s_0 \in S^{\times}$ and $k_1 \leq \frac{k_0}{2}$. Thus,
	\[
		f = \left[ s_1x^{k_1} + u_nx^{k_n} \right] + \left[s_0x^{k_0} + s_2x^{k_2} + \cdots + s_{n-1} x^{k_{n-1}} + v_nx^{k_n}\right].
	\]
	The first summand between brackets is clearly irreducible. The second summand is irreducible by Lemma~\ref{lemma: monolithic and gcd 1 implies irreducibility} and Lemma~\ref{lemma: all elements are in the lower half}. The case in which $f_*$ is a unit can be handled similarly. We leave its proof to the reader.
	
	\smallskip
	\noindent\textsc{Case 2:} neither $f^*$ nor $f_*$ is a unit. In this case, both $f^*$ and $f_*$ are irreducible. Note that $f = f^* + f_*$ unless there exists $j \in \llbracket 0,n \rrbracket$ satisfying that $k_j = \frac{k_0}{2}$. Consequently, let us assume that such an index $j \in \llbracket 0,n \rrbracket$ exists. If $|\supp(f^*)| \geq 3$ (resp., $|\supp(f_*)| \geq 3$) then $f^* + s_jx^{k_j}$ (resp.,  $f_* + s_jx^{k_j}$) is irreducible by Lemma~\ref{lemma: all elements are in the upper half} (resp., Lemma~\ref{lemma: all elements are in the lower half}), which concludes our argument. Hence, we may assume that $|\supp(f^*)| = |\supp(f_*)| = 2$. On the other hand, if $v_0 = 0$ (resp., $v_n = 0$) then $f^*+ s_jx^{k_j}$ (resp., $f_* + s_jx^{k_j}$) is irreducible by Lemma~\ref{lemma: monolithic and gcd 1 implies irreducibility} and Lemma~\ref{lemma: all elements are in the upper half} (resp., Lemma~\ref{lemma: all elements are in the lower half}), which concludes our argument. Therefore we need only consider the case where $v_0v_n \neq 0$. But this, along with the fact that $|\supp(f^*)| = |\supp(f_*)| = 2$, implies that $|\supp(f)| = 3$, which is a contradiction. 
	
	The last part of the theorem follows from Lemma~\ref{lemma: binomials that can be written as the sum of two irreducibles} and Lemma~\ref{lemma: trinomials can be written as the sum of two irreducibles}.
\end{proof}

\begin{cor} \label{cor: Goldbach conjecture for Laurent polynomials with positive integer}
	Every $f \in \mathbb{N}_0[x^{\pm 1}]$ can be written as the sum of two multiplicative irreducibles provided that $f(1) > 3$ and $|\supp(f)| > 1$.
\end{cor}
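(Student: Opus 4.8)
The plan is to derive Corollary~\ref{cor: Goldbach conjecture for Laurent polynomials with positive integer} as a direct consequence of Theorem~\ref{theorem: our principal result}. The key is to verify that $\mathbb{N}_0$ satisfies the hypotheses of the theorem and then to show that the two excluded families of polynomials are precisely the ones ruled out by the condition $f(1) > 3$.

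First I would observe that $\mathbb{N}_0$ is an additively reduced and additively atomic semidomain: it is a subsemiring of $\mathbb{Z}$, its only additively invertible element is $0$, and every positive integer is a sum of copies of $1$. Moreover the additive atoms of $\mathbb{N}_0$ are exactly $\{1\}$, and the group of units of the multiplicative monoid $\mathbb{N}_0^*$ is also $\{1\}$, so $\mathcal{A}_{+}(\mathbb{N}_0) = \mathbb{N}_0^{\times} = \{1\}$. This means condition (A)(1) of Theorem~\ref{theorem: our principal result} holds, and therefore every $f \in \mathbb{N}_0[x^{\pm 1}]$ with $|\supp(f)| > 1$ can be written as the sum of at most two multiplicative irreducibles by the equivalence (1)$\iff$(2).

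Next I would invoke part (B) of the theorem to upgrade ``at most two'' to ``exactly two.'' Part (B) guarantees that $f$ is the sum of \emph{exactly} two irreducibles unless $f$ has one of the two excluded forms: either (a) a binomial $ax^{k_0} + bx^{k_1}$ with $a \in S^{\times}$ or $b \in S^{\times}$, or (b) a trinomial $ax^{k_0} + bx^{k_1} + cx^{k_2}$ with all three coefficients units. Since $S^{\times} = \{1\}$ here, form (a) means a binomial with at least one coefficient equal to $1$, and form (b) means a trinomial $x^{k_0} + x^{k_1} + x^{k_2}$ with all coefficients equal to $1$. The main step is then to check that in both excluded cases the hypothesis $f(1) > 3$ fails: evaluating at $x = 1$, a binomial $ax^{k_0} + bx^{k_1}$ with, say, $a = 1$ gives $f(1) = 1 + b$, and being the sum of exactly two irreducibles is obstructed only when $f$ is itself irreducible, which by Lemma~\ref{lemma: binomials that can be written as the sum of two irreducibles}(2) happens exactly for such binomials; but one must confirm that $f(1) > 3$ forces $f$ out of these forms.

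The one point requiring care is that the condition $f(1) > 3$ does not literally coincide with avoiding forms (a) and (b) for \emph{all} such polynomials, so I expect the main obstacle to be handling the remaining small cases by hand. Concretely, a binomial $x^{k_0} + bx^{k_1}$ with $b \in \mathbb{N}$ can have $f(1) = 1 + b > 3$ while still having a unit coefficient, so I would argue that such a binomial, although irreducible, can nonetheless be rewritten as a sum of two irreducibles precisely when $b \geq 2$ via the decomposition in Lemma~\ref{lemma: binomials that can be written as the sum of two irreducibles}(1) applied after splitting $b = 1 + (b-1)$; the genuinely irreducible-only obstruction occurs just for forms where $f(1) \leq 3$, namely binomials $x^{k_0} + x^{k_1}$ (with $f(1)=2$) and trinomials $x^{k_0}+x^{k_1}+x^{k_2}$ (with $f(1)=3$). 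Thus I would conclude by verifying that every $f \in \mathbb{N}_0[x^{\pm 1}]$ with $|\supp(f)| > 1$ and $f(1) > 3$ escapes both excluded forms, so that part (B) applies and yields a representation as the sum of exactly two multiplicative irreducibles.
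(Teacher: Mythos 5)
Your overall route is the same as the paper's: verify that $\mathbb{N}_0$ is additively reduced and additively atomic with $\mathcal{A}_{+}(\mathbb{N}_0) = \mathbb{N}_0^{\times} = \{1\}$, apply Theorem~\ref{theorem: our principal result}, note that the excluded trinomial form and the binomial $x^{k_0}+x^{k_1}$ have $f(1)\le 3$, and then treat the one remaining excluded form --- binomials with exactly one unit coefficient --- by hand. The gap is in that last step. Your proposed decomposition, splitting $b=1+(b-1)$ ``via Lemma~\ref{lemma: binomials that can be written as the sum of two irreducibles}(1),'' does not produce two irreducibles: the decomposition in that lemma requires both coefficients to be non-units, and when $a=1$ it degenerates to
\[
x^{k_0}+bx^{k_1}=\bigl[x^{k_0}+(b-1)x^{k_1}\bigr]+x^{k_1},
\]
whose second summand is a unit of $\mathbb{N}_0[x^{\pm 1}]$, not an irreducible; the alternative grouping $\bigl[x^{k_0}+x^{k_1}\bigr]+(b-1)x^{k_1}$ fails whenever $b-1$ is composite (e.g.\ $b=5$ gives the reducible monomial $4x^{k_1}$). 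The split that works is $b=2+(b-2)$: the hypothesis $f(1)=1+b>3$ forces $b\ge 3$, so
\[
x^{k_0}+bx^{k_1}=\bigl[x^{k_0}+(b-2)x^{k_1}\bigr]+\bigl[2x^{k_1}\bigr],
\]
where the first summand is an irreducible binomial by Lemma~\ref{lemma: monolithic and gcd 1 implies irreducibility} and the second is irreducible because $2$ admits no nontrivial factorization in $\mathbb{N}_0$. This is precisely the argument the paper records in the inductive step of Corollary~\ref{cor: Laurent polynomials with finitely many variables}, via the decomposition $f=[ax^{t_0}+v_bx^{t_1}]+[u_bx^{t_1}+u_b'x^{t_1}]$ with $b=u_b+u_b'+v_b$.

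Two smaller inaccuracies in the same passage: the claim that such a binomial is a sum of two irreducibles ``precisely when $b\ge 2$'' fails at $b=2$ (every candidate splitting of $x^{k_0}+2x^{k_1}$ leaves a unit or zero as one summand), and correspondingly your list of polynomials that are irreducible but not sums of two irreducibles omits $x^{k_0}+2x^{k_1}$. Neither affects the corollary, since both have $f(1)\le 3$, but they stem from the same oversight about monomials: $cx^{k}$ is irreducible in $\mathbb{N}_0[x^{\pm 1}]$ exactly when $c$ is prime, and $x^{k}$ itself is a unit.
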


Corollary~\ref{cor: Goldbach conjecture for Laurent polynomials with positive integer} was first proved in \cite{liao} using information about the distribution of prime numbers in the positive integers. We have recovered this result without relying on specific information about prime numbers. In fact, we can now easily extend this result.

\begin{cor} \label{cor: Laurent polynomials with finitely many variables}
	Fix $k \in \mathbb{N}$. Every $f \in \mathbb{N}_0[x_1^{\pm 1}, \ldots, x_k^{\pm 1}]$ can be written as the sum of two multiplicative irreducibles provided that $f(1, \ldots, 1) > 3$ and $|\supp(f)| > 1$.
\end{cor}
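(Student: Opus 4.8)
The plan is to reduce the statement to a single application of Theorem~\ref{theorem: our principal result}. Write $S = \mathbb{N}_0[x_1^{\pm 1}, \ldots, x_{k-1}^{\pm 1}]$, so that $\mathbb{N}_0[x_1^{\pm 1}, \ldots, x_k^{\pm 1}] = S[x_k^{\pm 1}]$. Since $\mathbb{N}_0$ is additively reduced and additively atomic with $\mathcal{A}_{+}(\mathbb{N}_0) = \{1\} = \mathbb{N}_0^{\times}$, the extension property recorded just before this corollary gives, by induction on the number of variables, that $S$ is additively reduced and additively atomic with $\mathcal{A}_{+}(S) = S^{\times}$. Here $S^{\times}$ is exactly the set of monic monomials in $x_1, \ldots, x_{k-1}$, so a coefficient lying in $S$ is a unit if and only if it is a monic monomial. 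Thus the hypotheses of Theorem~\ref{theorem: our principal result} hold for $S[x_k^{\pm 1}]$.

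The obstruction to applying the theorem directly is that $|\supp(f)| > 1$ does not force $f$, viewed in $S[x_k^{\pm 1}]$, to have more than one term in $x_k$. To remove it I would first pass to convenient coordinates by a Kronecker-type substitution. For $N$ sufficiently large, the assignment $x_i \mapsto x_i x_k^{N^i}$ for $1 \le i \le k-1$ and $x_k \mapsto x_k$ extends to a semiring automorphism $\Phi$ of $\mathbb{N}_0[x_1^{\pm 1}, \ldots, x_k^{\pm 1}]$ (its inverse sends $x_i \mapsto x_i x_k^{-N^i}$): it permutes monomials bijectively, hence preserves coefficients, carries units to units, preserves irreducibility, and satisfies $\Phi(f)(1, \ldots, 1) = f(1, \ldots, 1)$. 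A monomial $x^{\alpha}$ acquires $x_k$-degree $\alpha_k + \sum_{i < k} \alpha_i N^i$, which for $N$ large is injective on the finite set $\supp(f)$. Hence $g := \Phi(f) = \sum_i s_i x_k^{e_i} \in S[x_k^{\pm 1}]$ has pairwise distinct $e_i$, each coefficient is a single term $s_i = C_i m_i$ with $C_i \in \mathbb{N}$ and $m_i \in S^{\times}$, the number of terms equals $|\supp(f)| \ge 2$, and $g(1, \ldots, 1) = \sum_i C_i = f(1, \ldots, 1) > 3$. Because $\Phi$ is an automorphism, it suffices to write $g$ as the sum of two multiplicative irreducibles.

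Now I would apply Theorem~\ref{theorem: our principal result} to $g \in S[x_k^{\pm 1}]$. Part (A) yields that $g$ is a sum of at most two irreducibles, and part (B) yields a sum of exactly two unless $g$ has one of the exceptional forms (a) or (b); if it has neither, we are done. Form (b) would make $g$ a trinomial all of whose coefficients are units, i.e. $C_1 = C_2 = C_3 = 1$, forcing $g(1, \ldots, 1) = 3$ and contradicting $g(1, \ldots, 1) > 3$, so form (b) cannot occur. The only remaining case is form (a): $g = s_0 x_k^{e_0} + s_1 x_k^{e_1}$ with, say, $s_0 \in S^{\times}$, whence $C_0 = 1$ and $C_1 = g(1, \ldots, 1) - 1 \ge 3$ — this is precisely where the hypothesis $f(1,\ldots,1) > 3$ is used. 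Writing $s_1 = C_1 m_1$, I split
\[
g = \bigl[\, s_0 x_k^{e_0} + (C_1 - 2)\, m_1 x_k^{e_1} \,\bigr] + 2\, m_1 x_k^{e_1}.
\]
The bracketed term is a genuine binomial, since $e_0 \ne e_1$ and $C_1 - 2 \ge 1$, with a unit coefficient $s_0$, hence irreducible by statement (2) of Lemma~\ref{lemma: binomials that can be written as the sum of two irreducibles}; the term $2\, m_1 x_k^{e_1}$ is the unit $m_1 x_k^{e_1}$ times the constant $2$, which is a multiplicative atom of $S$ (and $S^{\ast}$ is divisor-closed in $S[x_k^{\pm 1}]^{\ast}$, as $S$ is additively reduced), so it too is irreducible. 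This exhibits $g$, and therefore $f$, as a sum of two irreducibles.

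The conceptual core is that, once the coordinates are chosen so that every monomial of $f$ contributes a distinct power of $x_k$, membership in the two exceptional forms of Theorem~\ref{theorem: our principal result} is governed entirely by the integer content $f(1, \ldots, 1) = \sum_i C_i$: form (b) simply cannot arise, and form (a) is exactly the binomial situation that the bound $f(1) > 3$ was designed to handle in Corollary~\ref{cor: Goldbach conjecture for Laurent polynomials with positive integer}. I expect the step requiring the most care to be the verification that the substitution $\Phi$ is a semiring automorphism that preserves irreducibility and the evaluation at $(1, \ldots, 1)$ while separating the $x_k$-degrees; once that reduction is in place, the remainder is the short case analysis above.
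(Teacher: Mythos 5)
Your proof is correct, but it takes a genuinely different route from the paper's. The paper argues by induction on the number of variables: it views $\mathbb{N}_0[x_1^{\pm 1},\ldots,x_{k+1}^{\pm 1}]$ as $S[x_{k+1}^{\pm 1}]$ with $S=\mathbb{N}_0[x_1^{\pm 1},\ldots,x_k^{\pm 1}]$, applies Theorem~\ref{theorem: our principal result}, and disposes of the exceptional binomial form by writing the non-unit coefficient as a sum $u_b+u_b'+v_b$ of three additive atoms and splitting off the irreducible $(u_b+u_b')x_{k+1}^{t_1}$ --- of which your term $2m_1x_k^{e_1}=(m_1+m_1)x_k^{e_1}$ is exactly the special case $u_b=u_b'$. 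Your Kronecker substitution replaces that induction: by forcing all monomials of $f$ into distinct $x_k$-degrees you guarantee $|\supp(g)|=|\supp(f)|>1$ in $S[x_k^{\pm 1}]$, which is precisely the situation the paper's induction exists to handle (without the substitution, $f$ can collapse to a single $x_{k+1}$-monomial with non-unit coefficient, where the theorem is silent and the inductive hypothesis must be invoked). What your approach buys is a one-shot reduction in which every coefficient of $g$ has the form $C_im_i$ with $m_i$ a unit, so membership in the exceptional forms (a) and (b) is read off from the integer $f(1,\ldots,1)$ alone; the cost is the (routine but necessary) verification that $\Phi$ is a semiring automorphism preserving irreducibility, supports, and evaluation at $(1,\ldots,1)$. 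One small imprecision: $S^{*}$ is not literally divisor-closed in $S[x_k^{\pm 1}]^{*}$ (for instance $2x_k$ divides $2$ there), only divisor-closed up to associates; this still gives what you need, namely that the atom $2$ of $S^{*}$ remains an atom of $S[x_k^{\pm 1}]^{*}$, but the justification should be phrased in terms of degrees adding with no cancellation rather than by citing divisor-closedness verbatim.
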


\begin{proof}
	We proceed by induction on $k$. The result clearly holds for $k = 1$. Suppose that, for some $k \geq 1$, every polynomial $f \in \mathbb{N}_0[x_1^{\pm 1}, \ldots, x_k^{\pm 1}]$ can be written as the sum of two irreducibles provided that $f(1, \ldots, 1) > 3$ and $|\supp(f)| > 1$. Now set $S \coloneqq  \mathbb{N}_0[x_1^{\pm 1}, \ldots, x_k^{\pm 1}]$, and observe that the monoid $(S,+)$ is reduced and atomic. Moreover, it is not hard to see that
	\[
		\mathcal{A}_{+}(S) = S^{\times} = \left\{x_1^{n_1} \cdots x_k^{n_k} \mid n_i \in \mathbb{Z} \text{ for every } i \in \llbracket 1,k \rrbracket\right\}.
	\]  
We note that $g(x_1,\ldots, x_k) \in \mathcal{A}_{+}(S)$ if and only if $g(1,\ldots, 1) = 1$.

    By Theorem~\ref{theorem: our principal result}, we have that every $f \in S\left[x_{k + 1}^{\pm 1}\right]$ is the sum of exactly two multiplicative irreducibles unless $f$ has one of the following forms:
	\begin{enumerate}
		\item[(a)] $f = ax_{k + 1}^{t_0} + bx_{k + 1}^{t_1}$, where either $a \in S^{\times}$ or $b \in S^{\times}$;
		\item[(b)] $f= ax_{k + 1}^{t_0} + bx_{k + 1}^{t_1} + cx_{k + 1}^{t_2}$, where $a,b,c \in S^{\times}$.
	\end{enumerate}
	If $f= ax_{k + 1}^{t_0} + bx_{k + 1}^{t_1} + cx_{k + 1}^{t_2}$ or $f = ax_{k + 1}^{t_0} + bx_{k + 1}^{t_1}$, where $a,b,c \in S^{\times}$, then $f(1, \ldots, 1) \leq 3$. Therefore we need only consider the case where $f = ax_{k + 1}^{t_0} + bx_{k + 1}^{t_1}$ and either $a \notin S^{\times}$ or $b \notin S^{\times}$. Without loss of generality, assume that $a \in S^{\times}$ and $b \notin S^{\times}$. It is not hard to see that if $b$ is the sum of two units of $S$ then $f(1, \cdots, 1) = 3$. Then we may further assume that $b$ is not the sum of two units and we can write $b = u_b + u_b' + v_b$ with $u_b, u_b' \in S^{\times}$ and $v_b \in S^*$. In this case,
	\[
		f = \left[ax_{k + 1}^{t_0} + v_bx_{k + 1}^{t_1}\right] + \left[u_bx_{k + 1}^{t_1} + u_b'x_{k + 1}^{t_1}\right].
	\]
	It is clear that the summands between brackets are multiplicative irreducibles.
\end{proof}

Some of the auxiliary results established in the lead-up to Theorem~\ref{theorem: our principal result} have applications that extend beyond its proof, as the following example demonstrates.

\begin{example} \label{ex: polynomials with coefficients in the positive cone of Q satisfy Goldbach}
    The semidomain \( \mathbb{Q}_{\geq 0} \) is additively reduced but not additively atomic. Indeed, the set of additive atoms \( \mathcal{A}_{+}(\mathbb{Q}_{\geq 0}) \) is empty. As a result, we cannot apply Theorem~\ref{theorem: our principal result} directly to determine whether an analogue of the Goldbach conjecture holds for \( \mathbb{Q}_{\geq 0}[x^{\pm 1}] \). However, by combining Lemma~\ref{lemma: all elements are in the lower half}, Lemma~\ref{lemma: all elements are in the upper half}, and Theorem~\ref{theorem: our principal result}, we can still establish such a result. Showing that every binomial and trinomial in \( \mathbb{Q}_{\geq 0}[x^{\pm 1}] \) can be expressed as a sum of two multiplicative irreducibles is straightforward, so we leave this to the reader.

    Consider \( f \in \mathbb{Q}_{\geq 0}[x^{\pm 1}] \) with \( |\supp(f)| > 3 \) and choose \( N \in \mathbb{N} \) such that \( Nf \in \mathbb{N}_0[x^{\pm 1}] \). Theorem~\ref{theorem: our principal result} implies that we can write \( Nf = g + h \), where \( g \) and \( h \) are multiplicative irreducibles in \( \mathbb{N}_0[x^{\pm 1}] \). The argument from the proof of that theorem shows that each of \( g \) and \( h \) is either a binomial or satisfies one of the structural conditions in Lemma~\ref{lemma: all elements are in the lower half} or Lemma~\ref{lemma: all elements are in the upper half}.  If $g$ is a binomial, then $g/N$ is a multiplicative irreducible in $\mathbb{Q}_{\geq 0}[x^{\pm 1}]$. If $g$ satisfies one of the structural conditions in Lemma~\ref{lemma: all elements are in the lower half} or Lemma~\ref{lemma: all elements are in the upper half}, then because these lemmas hold for all additively reduced semidomains, $g/N$ is irreducible in $\mathbb{Q}_{\geq 0}[x^{\pm 1}]$.  The corresponding statements hold for $h$ and $h/N$. Therefore, every \( f \in \mathbb{Q}_{\geq 0}[x^{\pm 1}] \) with \( |\supp(f)| > 1 \) can be written as a sum of two multiplicative irreducibles.
\end{example}

For Laurent polynomials \( f, g, h \in S[x^{\pm 1}] \), we say that \( \{g, h\} \) is a \emph{Goldbach decomposition} of \( f \) if \( f = g + h \) and both \( g \) and \( h \) are multiplicative irreducibles in \( S[x^{\pm 1}] \). We conclude this section by pointing out that, for an additively reduced semidomain $S$ satisfying that $(S,+)$ is atomic and $\mathcal{A}_{+}(S) = S^{\times}$, the number of Goldbach decompositions of a Laurent polynomial in $S[x^{\pm 1}]$ depends on the semidomain $S$. While every $f \in \mathbb{N}_0[x^{\pm 1}]$ has finitely many Goldbach decompositions, there exist semidomains $S$ for which some polynomials in $S[x^{\pm 1}]$ have infinitely many Goldbach decompositions. Consider the following example.  

\begin{example}
	Consider the additive monoid $M = \langle (\frac{2}{3})^k \mid k \in \mathbb{Z}\rangle$, which is clearly reduced. It is known that $M$ is atomic and $\mathcal{A}(M) = \{(\frac{2}{3})^k \mid k \in \mathbb{Z}\}$ \cite[Proposition~3.5]{fG2018}. Observe that $M$ is, in fact, a semidomain as it is closed with respect to the usual multiplication of rational numbers. Denoting this semidomain by $S$, we have that $\mathcal{A}_{+}(S) = S^{\times}$. By Theorem~\ref{theorem: our principal result}, we know that a binomial $ax^{k_0} + bx^{k_1} \in S[x^{\pm 1}]$ can be written as the sum of two multiplicative irreducibles whenever $a,b \notin S^{\times}$. However, the polynomial $f = \frac{4}{3}x + 2$ has infinitely many Goldbach decompositions in $S[x^{\pm 1}]$. In fact, using the identity $2(\frac{2}{3})^n = 3(\frac{2}{3})^{n + 1}$, it is not hard to show that, for every $n \in \mathbb{N}$, we can write $\frac{4}{3} = (\frac{2}{3})^n + s_n$ for some $s_n \in S^*$. Thus, for every $n \in \mathbb{N}$, we have that
	\[
			f = \left[\left(\frac{2}{3}\right)^n\!\!x + 1\right] + \left[s_nx + 1\right],
	\]
	where each summand between brackets is irreducible.
\end{example}

\section{Laurent series as the sum of three irreducibles}
\label{sec:Lauren series}

Recall that for an additively reduced semidomain $S$, we denote the semidomain consisting of all Laurent series with coefficients in $S$ that have finitely many negative terms by $S\llbracket x^{\pm 1} \rrbracket$. In this section, we characterize the additively reduced semidomains $S$ such that $(S,+)$ is atomic and every Laurent series $f \in S\llbracket x^{\pm 1} \rrbracket$ that is not a monomial can be written as the sum of at most three multiplicative irreducibles. For such an $S$, we show that every Laurent series with coefficients in $S$ that is not a polynomial can be written as the sum of at most three irreducibles in at least $2^{\aleph_0}$ ways.

Throughout this section, whenever we consider a Laurent series expression $f = \sum_{i = 0}^{\infty} s_ix^{k_i} \in S\llbracket x^{\pm 1} \rrbracket$, we tacitly assume that $k_i < k_{i + 1}$ for every $i \in \mathbb{N}_0$ and, unless we specify otherwise, we also assume that $s_i \neq 0$ for every $i \in \mathbb{N}_0$. 

We start by extending the property of being monolithic to the context of Laurent series with coefficients in a semidomain. We say that a nonzero Laurent series $f \in S\llbracket x^{\pm 1} \rrbracket$ is \emph{monolithic} if $f = gh$ implies that either $g$ or $h$ is a monomial in $S\llbracket x^{\pm 1} \rrbracket$. We have the following analogue of Lemma~\ref{lemma: monolithic and gcd 1 implies irreducibility}.

\begin{lemma} \label{lemma: monolithic and gcd 1 implies irreducibility for series}
	Let $f = \sum_{i = 0}^{\infty} s_ix^{k_i} \in S\llbracket x^{\pm 1} \rrbracket$ with $s_i \in S$ for every $i \in \mathbb{N}_0$. Suppose that $|\supp(f)| > 1$. Then $f$ is multiplicative irreducible in $S\llbracket x^{\pm 1}\rrbracket$ if and only if $f$ is monolithic and $1 \in \gcd\left(\{s_i \mid i \in \mathbb{N}_0\}\right)$. 
\end{lemma}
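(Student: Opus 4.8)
The plan is to mirror the proof of Lemma~\ref{lemma: monolithic and gcd 1 implies irreducibility}, adapting each step to the Laurent series setting. Throughout I will lean on two facts recorded in the preliminaries: that $S\llbracket x^{\pm 1}\rrbracket^{\times} = \{sx^k \mid s \in S^{\times},\ k \in \mathbb{Z}\}$, so that every unit is a monomial, and that $S$, being a semidomain, sits inside an integral domain and hence has no zero divisors. The latter will guarantee that multiplying a Laurent series by a nonzero constant from $S$ preserves its support, which is what keeps the bookkeeping inside $S\llbracket x^{\pm 1}\rrbracket$.

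For the forward implication, suppose $f$ is irreducible. First, $f$ is monolithic: if $f = gh$, then irreducibility forces $g$ or $h$ to be a unit, and since every unit is a monomial, one of the factors is a monomial, as required. Next I show $1 \in \gcd(\{s_i \mid i \in \mathbb{N}_0\})$. Given an arbitrary common divisor $d$ of the coefficients, I write $s_i = d s_i'$ with $s_i' \in S$, where $s_i' = 0$ precisely when $s_i = 0$ by the absence of zero divisors, and set $g = \sum_{i=0}^{\infty} s_i' x^{k_i}$. Since $\supp(g) = \supp(f)$, the series $g$ has finitely many negative terms and therefore genuinely lies in $S\llbracket x^{\pm 1}\rrbracket$, with $f = dg$. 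Irreducibility of $f$ then forces $d$ or $g$ to be a unit; but $\supp(g) = \supp(f)$ has more than one element, so $g$ is not a monomial and hence not a unit. Thus $d$ is a unit, i.e.\ $d \mid_S 1$. As $d$ was an arbitrary common divisor, $1$ is a greatest common divisor.

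For the converse, suppose $f$ is monolithic with $1 \in \gcd(\{s_i \mid i \in \mathbb{N}_0\})$, and let $f = gh$. Because $f$ is monolithic, one factor, say $g$, is a monomial $g = sx^k$. Expanding $f = sx^k h$ shows that $s$ divides every coefficient of $f$, so $s$ is a common divisor of $\{s_i \mid i \in \mathbb{N}_0\}$. Since $1$ is a greatest common divisor, $s \mid_S 1$, whence $s \in S^{\times}$ and $g$ is a unit of $S\llbracket x^{\pm 1}\rrbracket$. Therefore $f$ is irreducible.

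The argument transfers almost verbatim from the polynomial case, so the only place requiring genuine care is the forward direction, specifically verifying that the factorization $f = dg$ produced from a common divisor remains a legitimate Laurent series and that $\supp(g) = \supp(f)$. This is exactly where the two ambient hypotheses do the real work: the no-zero-divisor property of a semidomain preserves the support (and hence the finiteness of the negative part), while the description of units as monomials is what converts the conclusion ``$g$ is not a unit'' into ``$g$ is not a monomial,'' ruling out the degenerate factorization. Once these two points are in hand, every remaining step is routine.
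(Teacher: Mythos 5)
Your proof is correct and follows essentially the same route as the paper, which simply observes that the units of $S\llbracket x^{\pm 1}\rrbracket$ are the monomials $sx^k$ with $s \in S^{\times}$ and then repeats the argument of Lemma~\ref{lemma: monolithic and gcd 1 implies irreducibility}. The only difference is that you spell out the details the paper leaves to the reader, in particular the factorization $f = dg$ showing that any common divisor of the coefficients must be a unit.
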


\begin{proof}
The direct implication follows readily upon noticing
that $S\llbracket x^{\pm 1}\rrbracket^{\times} = \left\{sx^k \mid s \in S^{\times} \text{ and } k \in \mathbb{Z}\right\}$. The rest of the proof proceeds as in Lemma~\ref{lemma: monolithic and gcd 1 implies irreducibility}. We leave the details to the reader.
\end{proof}

Next we introduce sufficient conditions under which a Laurent series is monolithic. The following two lemmas will play an important role in the proof of the main result of this section. 

\begin{lemma} \label{lemma: hyper-monolithic Laurent series is irreducible}
	Suppose $f = \sum_{i = 0}^{\infty} s_ix^{k_i} \in S\llbracket x^{\pm 1} \rrbracket$ with $s_i \in S^*$ for every $i \in \mathbb{N}_0$ and $k_1 - k_0 < k_{i + 1} - k_i$ for every $i \in \mathbb{N}$. Then $f$ is monolithic.
\end{lemma}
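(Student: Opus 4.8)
The plan is to prove that $f = \sum_{i=0}^{\infty} s_i x^{k_i}$ is monolithic by contradiction, following the same degree-counting strategy used in Lemmas~\ref{lemma: all elements are in the lower half} and~\ref{lemma: all elements are in the upper half}, but exploiting the \emph{gap condition} $k_1 - k_0 < k_{i+1} - k_i$ rather than a midpoint condition. First I would suppose $f = gh$ where neither $g$ nor $h$ is a monomial in $S\llbracket x^{\pm 1}\rrbracket$. After scaling $f$, $g$, and $h$ by suitable powers of $x$, I may assume that the smallest exponents of $g$ and of $h$ are both $0$ and that their lowest-degree coefficients are nonzero; since $k_0$ is the lowest exponent of $f$, this forces $k_0 = 0$ as well.

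The key observation is that, because $S$ is additively reduced, there is no cancellation of terms when multiplying: for any two series in $S\llbracket x^{\pm 1}\rrbracket$ with nonzero constant terms, the product has a nonzero term in each of the two lowest available exponent slots coming from the lowest terms of the factors. Concretely, write $g = a_0 + a_1 x^{p_1} + \cdots$ and $h = b_0 + b_1 x^{q_1} + \cdots$ with $a_0 b_0 \neq 0$ and $0 < p_1, 0 < q_1$ the next exponents appearing. Then the constant term of $f = gh$ is $a_0 b_0$ (nonzero, consistent with $k_0 = 0$), and the next nonzero term of $f$ occurs at exponent $\min(p_1, q_1)$, which must therefore equal $k_1$. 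Thus $k_1 = \min(p_1, q_1) \geq 1$, and in fact one of the factors, say $g$, has its second-lowest exponent equal to $k_1$, i.e. $p_1 = k_1$ (after possibly swapping $g$ and $h$).

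The heart of the argument is then to derive a contradiction by locating a term in $f$ whose exponent gap from $k_1$ is strictly smaller than $k_1 - k_0 = k_1$. Since $h$ is not a monomial, $h$ has a nonzero term $b_0$ at exponent $0$ and some nonzero term at exponent $q_1 \geq k_1$. Multiplying the term $a_1 x^{k_1}$ of $g$ by $b_0$ produces a term at exponent $k_1$ in $f$, while multiplying $a_0$ by the term of $h$ at exponent $q_1$ produces a contribution at exponent $q_1$. The plan is to show that the next exponent appearing in $f$ after $k_1$ — call it $k_2$ — satisfies $k_2 - k_1 < k_1 - k_0$, contradicting the gap hypothesis. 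This is where the additively-reduced condition is essential, as it guarantees these product terms survive and pin down the support of $f$ precisely in terms of the supports of $g$ and $h$. The main obstacle I anticipate is bookkeeping the two lowest gaps of $f$ carefully enough to produce the strict inequality: I expect to compare $k_1 = \min(p_1,q_1)$ against the \emph{second} gap, and to show that whichever factor contributed the exponent $k_1$ forces a subsequent exponent of $f$ lying within distance less than $k_1$ of $k_1$, using that both $g$ and $h$ are non-monomials and hence each contributes at least two exponents. Once that strict inequality is exhibited, it directly violates $k_1 - k_0 < k_{i+1} - k_i$ (taken at $i = 1$), completing the contradiction and establishing that $f$ is monolithic.
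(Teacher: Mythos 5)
Your setup matches the paper's: assume $f=gh$ with neither factor a monomial, use the additively reduced hypothesis to conclude there is no cancellation (so the support of $f$ is exactly the sumset of the supports of $g$ and $h$), and identify $k_1-k_0$ with the smallest second gap of the two factors, say $p_1 = k_1 - k_0$ for $g$. Up to that point the argument is sound. The genuine gap is in where you plan to locate the contradiction: you commit to showing $k_2 - k_1 < k_1 - k_0$, i.e.\ a violation of the hypothesis at $i=1$. That intermediate claim is false in general. Take $g = 1 + x$ and $h = 1 + x^{10} + x^{1000} + \cdots$; then $f = gh = 1 + x + x^{10} + x^{11} + x^{1000} + \cdots$, so $k_1 - k_0 = 1$ while $k_2 - k_1 = 9$. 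The small gap reappears not immediately after $k_1$ but much later, between $q_1$ and $q_1 + p_1$ (here between $10$ and $11$). Also, the strict inequality you aim for is more than you can get: the recurring gap equals $k_1 - k_0$, it is not smaller, and equality already suffices to contradict the strict hypothesis $k_1 - k_0 < k_{i+1}-k_i$.

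The repair is exactly the step the paper takes, and it is short: with $g = \sum d_i x^{t_i}$, $h = \sum e_i x^{r_i}$ and $t_1 - t_0 \le r_1 - r_0$, both $t_0 + r_1$ and $t_1 + r_1$ lie in $\supp(f)$ (no cancellation), and they differ by exactly $t_1 - t_0 = k_1 - k_0$. Writing $t_1 + r_1 = k_{j+1}$ for some $j \ge 1$ (it exceeds $k_1 = t_1 + r_0$), one gets $k_j \ge t_0 + r_1$ and hence $k_{j+1} - k_j \le k_1 - k_0$, contradicting the hypothesis at index $j$ rather than at index $1$. So your approach is salvageable, but as written the key inequality you propose to prove does not hold.
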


\begin{proof}
	We argue by contradiction. Suppose that $f$ is not monolithic, that is, $f = gh$, where neither $g$ nor $h$ is a monomial in $S\llbracket x^{\pm 1}\rrbracket$. We can write $g =\sum_{i=0}^\infty d_ix^{t_i}$ and $h=\sum_{i=0}^{\infty}e_ix^{r_i}$, where $d_0, d_1, e_0, e_1 \in S^*$ and $d_i, e_i \in S$ for every $i \in \mathbb{N}$. It is possible that $g$ or $h$ could be a polynomial that is not a monomial. Note that $k_0 = t_0 + r_{0}$. Switching the roles of $g$ and $h$ if necessary, assume that $t_{1} -t_0 \leq r_{1} - r_{0}$. This implies that $k_{1} = t_{1} + r_{0}$. Thus,
	\[
	t_{1} - t_0 = (t_{1} + r_{0}) - (t_0 + r_{0}) = k_{1} - k_0.
	\] 
	Since $S$ is additively reduced and $k_1 < t_1 + r_1$, we see that $t_1 + r_1 = k_{j + 1}$ for some $j \geq 1$. Since $t_0 + r_1$ is in the support of $f$, we see that $k_j \geq t_0 + r_1$. For this $j$ we have
	\[
		k_{j + 1} - k_j \leq (t_1 + r_1) - (t_0 + r_1) = t_1 - t_0 = k_1 - k_0.
	\]
	This is a contradiction and we conclude that $f$ is monolithic.
\end{proof}

\begin{lemma} \label{lemma: eventually increasing sequence of exponents implies irreducibility}
	Let $f = \sum_{i = 0}^{\infty} s_ix^{k_i} \in S\llbracket x^{\pm 1} \rrbracket$ with $s_i \in S^*$ for every $i \in \mathbb{N}_0$. Suppose that for some $N \in \mathbb{N}$, the sequence $(k_{N + i + 1} - k_{N + i})_{i \in \mathbb{N}}$ is strictly increasing. Then $f$ is monolithic.
\end{lemma}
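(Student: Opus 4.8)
The statement to prove is Lemma~\ref{lemma: eventually increasing sequence of exponents implies irreducibility}: if the gap sequence $(k_{N+i+1}-k_{N+i})_{i\in\mathbb{N}}$ is eventually strictly increasing, then $f$ is monolithic. My plan is to reduce this to the situation already handled by Lemma~\ref{lemma: hyper-monolithic Laurent series is irreducible}, which treats a series whose first gap is strictly smaller than every later gap. The obstacle is that an eventually increasing gap sequence may still have early gaps (the first $N$ of them) that are large, so the series $f$ itself need not satisfy the hypothesis of the previous lemma. The idea is therefore to split off a finite initial segment of $f$ as a polynomial factor cannot do much damage, and to use the \emph{tail} of $f$, where the gaps are strictly increasing, to force any factorization to be trivial.

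\textbf{Key steps.} First I would argue by contradiction: suppose $f = gh$ with neither $g$ nor $h$ a monomial. After scaling by powers of $x$ (as in the earlier proofs) I may assume the lowest exponents of $g$ and $h$ are both $0$, so that $k_0 = 0$ and the lowest exponents of $f$, $g$, $h$ all align. Writing $g = \sum d_i x^{t_i}$ and $h = \sum e_i x^{r_i}$ with increasing exponents and $d_0,d_1,e_0,e_1 \in S^*$, the two smallest exponents of $f$ come from $t_0+r_0$ and from $\min(t_1+r_0,\,t_0+r_1)$, exactly as in Lemma~\ref{lemma: hyper-monolithic Laurent series is irreducible}. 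The heart of the matter is to look far out in the support of $f$, past the index $N$ where the gaps become strictly increasing, and to exploit the following mechanism: every exponent of $f$ is a sum $t_a + r_b$, and consecutive exponents of $f$ in the strictly-increasing region are produced by incrementing one of the two indices. Because the gaps of $f$ strictly increase while the ``step'' contributed by advancing $g$ or $h$ is bounded below by a fixed smallest positive gap of $g$ or of $h$, one obtains a telescoping/pigeonhole contradiction: advancing past a large-gap step of $f$ would require a step in $g$ or $h$ larger than any that can recur, yet infinitely many such large steps are needed.

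\textbf{Making the contradiction precise.} Concretely, I would consider the infinitely many exponents $k_{N+i}$ and track, for each, a representation $k_{N+i} = t_{a_i} + r_{b_i}$. Since $f$ has no gap smaller than $\delta := \min(t_1-t_0,\, r_1-r_0) > 0$ forced by the minimal nonzero steps of the factors, but the gaps $k_{N+i+1}-k_{N+i}$ grow without any matching supply of ever-larger single steps in a fixed factor, I would isolate the smallest positive step $\mu$ available in $g$ together with the smallest positive step $\nu$ available in $h$. Every increase from $k_{N+i}$ to $k_{N+i+1}$ is realized by increasing $a_i$ or $b_i$, hence is at least $\min(\mu,\nu)$ and—crucially—any single-index advance is bounded \emph{above} by the relevant gap in that one factor, which cannot grow faster than the gaps of $f$ without one of $g,h$ itself having an unbounded strictly-increasing gap pattern. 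Pushing this bookkeeping yields the same inequality contradiction as in Lemma~\ref{lemma: hyper-monolithic Laurent series is irreducible}, namely a later gap of $f$ bounded by an earlier (smaller) gap. The hardest part will be the combinatorial step of choosing the representations $k_{N+i} = t_{a_i}+r_{b_i}$ coherently so that the gap comparison closes; I expect this to follow by applying the argument of Lemma~\ref{lemma: hyper-monolithic Laurent series is irreducible} to a suitable shifted tail, after verifying that the strictly increasing gaps of the tail play exactly the role that the single strict inequality $k_1 - k_0 < k_{i+1}-k_i$ played there.
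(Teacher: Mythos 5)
There is a genuine gap: the combinatorial mechanism you propose for the contradiction does not work as described, and you explicitly leave its decisive step open. Two of your intermediate claims are false for sumsets. First, consecutive exponents of $f=gh$ need not be ``produced by incrementing one of the two indices'' in representations $k=t_a+r_b$; passing from one element of $\supp(g)+\supp(h)$ to the next may require changing both indices, one of them downward. Second, the gaps of $\supp(f)$ are not bounded below by the minimal positive steps $\mu,\nu$ of the factors: if $\supp(g)=\{0,3\}$ and $\supp(h)=\{0,5\}$, then $\supp(f)=\{0,3,5,8\}$ has a gap of $2<\min(3,5)$. Consequently the ``telescoping/pigeonhole'' bookkeeping you sketch has no sound foundation, and your last sentence concedes that the key step remains to be found. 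The preliminary idea of splitting off a finite initial segment also cannot be made to work: the tail of $f$ is an additive summand, not a multiplicative factor, so monolithicity of a tail does not transfer to $f$.

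The missing observation, which makes the paper's proof short, is that since every $s_i$ is nonzero, $f$ is not a polynomial, so at least one factor --- say $h$ --- has infinite support; and since $S$ is additively reduced, $\supp(gh)=\supp(g)+\supp(h)$ (no coefficient cancellation can occur). Hence for every $r\in\supp(h)$ both $t_0+r$ and $t_1+r$ lie in $\supp(f)$, where $t_0<t_1$ are the two lowest exponents of $g$: the support of $f$ contains pairs of elements at mutual distance at most $t_1-t_0$ arbitrarily far to the right. But a strictly increasing sequence of positive integers is unbounded, so the hypothesis on $(k_{N+i+1}-k_{N+i})_{i\in\mathbb{N}}$ forces every sufficiently late gap of $\supp(f)$ to exceed the fixed constant $t_1-t_0$, a contradiction. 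Your instinct to ``look far out in the support'' is right, but the object to push far out is a single term of one non-polynomial factor, multiplied against the two lowest terms of the other factor --- not a chain of representations of consecutive exponents of $f$.
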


\begin{proof}
	We follow the same strategy as in the proof of Lemma~\ref{lemma: hyper-monolithic Laurent series is irreducible}. Suppose that $f = gh$, where neither $g$ nor $h$ is a monomial in $S\llbracket x^{\pm 1} \rrbracket$. We can write $g =\sum_{i=0}^\infty d_ix^{t_i}$ and $h=\sum_{i=0}^{\infty}e_ix^{r_i}$, where $d_0, d_1, e_0, e_1 \in S^*$ and $d_i, e_i \in S$ for every $i \in \mathbb{N}$. It is possible that $g$ or $h$ could be a polynomial that is not a monomial. If $g$ and $h$ are both polynomials then so is $f$, which is a contradiction. Switching the roles of $g$ and $h$ if necessary, we can assume that $h$ is not a polynomial. Therefore, we may assume that $e_i \in S^*$ for every $i \in \mathbb{N}_0$. 
	
	Let $N' \in \mathbb{N}$ such that $N' > N$ and $k_{N' + 1} - k_{N'} > t_1 - t_0$. Since $S$ is additively reduced, $r_{N' + 1} \in \supp(h)$, and $t_0, t_1 \in \supp(g)$, we have that $\{t_0 + r_{N' + 1}, t_1 + r_{N' + 1}\} \subseteq \supp(f)$. This implies $k_j = t_0 + r_{N' + 1}$ for some $j$, and it is clear that $k_{j + 1} \leq t_1 + r_{N' + 1}$. Therefore $k_{j + 1} - k_j \leq t_1 - t_0$, which is a contradiction. We conclude that $f$ is monolithic.
\end{proof}

Now we are in a position to prove the main result of this section.

\begin{theorem} \label{theorem: our second principal result}
	Let $S$ be an additively reduced and additively atomic semidomain. The following statements are equivalent:
	\begin{enumerate}
		\item $\mathcal{A}_{+}(S) = S^{\times}$\!;
		
		\smallskip
		\item every $f \in S\llbracket x^{\pm 1} \rrbracket$ with $|\supp(f)| > 1$ can be expressed as the sum of at most three multiplicative irreducibles;
		
		\smallskip
		\item there exists $k \in \nn$ such that every $f \in S\llbracket x^{\pm 1}\rrbracket$ with $|\supp(f)| > 1$ can be expressed as the sum of at most $k$ multiplicative irreducibles.
	\end{enumerate}
\end{theorem}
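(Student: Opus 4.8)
The implications $(2)\!\implies\!(3)$ and $(3)\!\implies\!(1)$ follow essentially as in Theorem~\ref{theorem: our principal result}. For $(2)\!\implies\!(3)$ there is nothing to prove (take $k=3$). For $(3)\!\implies\!(1)$, I would argue by contradiction exactly as in the polynomial case: if $a \in \mathcal{A}_{+}(S)\setminus S^{\times}$, then for any $n\in\nn$ the Laurent polynomial $\sum_{i=0}^{n} a x^{i}$ (which is also a Laurent series) cannot be written as the sum of $n$ or fewer multiplicative irreducibles, since $S$ is additively reduced; this forces $\mathcal{A}_{+}(S)\subseteq S^{\times}$. The reverse inclusion $S^{\times}\subseteq\mathcal{A}_{+}(S)$ is obtained verbatim from the argument in Theorem~\ref{theorem: our principal result}: if some unit $u$ failed to be an additive atom, then $1$ would be a sum of two nonzero elements, making $(S,+)$ antimatter and hence (being atomic) an additive group, contradicting that $S$ is additively reduced.

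The substance of the theorem is the implication $(1)\!\implies\!(2)$, and this is where the main work lies. The plan is to reduce an arbitrary Laurent series $f$ with $|\supp(f)|>1$ to one of two regimes. After scaling by a power of $x$, I would first dispose of the case where $f$ is a Laurent polynomial by invoking Theorem~\ref{theorem: our principal result}, which already writes such an $f$ as a sum of at most two irreducibles. So assume $f=\sum_{i=0}^{\infty}s_i x^{k_i}$ is a genuine (non-polynomial) Laurent series, with all $s_i\in S^*$. The key idea is to peel off the first coefficient additively: writing $s_0=u_0+v_0$ with $u_0\in S^{\times}$ and $v_0\in S$, I would try to split $f$ into a ``monomial-seeded'' summand carrying $u_0 x^{k_0}$ plus the rest, arranging the exponents so that each summand falls under one of the monolithicity criteria of Lemma~\ref{lemma: hyper-monolithic Laurent series is irreducible} or Lemma~\ref{lemma: eventually increasing sequence of exponents implies irreducibility}.

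The mechanism I expect to use is a \emph{gap-creating substitution}: replace the exponents $k_i$ by a new strictly increasing sequence whose successive differences are eventually strictly increasing, so that Lemma~\ref{lemma: eventually increasing sequence of exponents implies irreducibility} guarantees the relevant summands are monolithic, and then apply Lemma~\ref{lemma: monolithic and gcd 1 implies irreducibility for series} to upgrade monolithic to irreducible once a unit coefficient is present. Concretely, I would distribute the tail of $f$ between (at most) two series whose exponent gaps grow without bound, each seeded with a unit coefficient drawn from an additive atom decomposition of some $s_i$; the third irreducible absorbs whatever finite ``obstruction'' (an initial binomial or trinomial, or a coefficient that is a sum of units) prevents a two-term split. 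The reason three summands may genuinely be needed, rather than two, is that a non-polynomial tail forces us to reserve one summand purely to realize the infinite support with controlled gaps, while the finite low-order part of $f$ may itself require two irreducibles in the worst case (mirroring the trinomial-of-units obstruction of Theorem~\ref{theorem: our principal result}).

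The hard part will be the bookkeeping that simultaneously (i) ensures every summand has a unit among its coefficients, so that $1\in\gcd$ and Lemma~\ref{lemma: monolithic and gcd 1 implies irreducibility for series} applies, and (ii) arranges the exponents of each summand to satisfy the hypothesis of Lemma~\ref{lemma: hyper-monolithic Laurent series is irreducible} or Lemma~\ref{lemma: eventually increasing sequence of exponents implies irreducibility}, while (iii) having the summands add back exactly to $f$. The delicate point is that the original gaps $k_{i+1}-k_i$ need not grow, so I cannot apply the monolithicity lemmas to $f$ directly; the art is to partition the support into two infinite subsequences each of whose induced gaps \emph{are} eventually increasing, which is always possible for an infinite subset of $\mathbb{Z}$ but must be done so that the coefficient decompositions remain consistent. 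I anticipate that handling the finitely many low-order terms separately — exactly the binomial/trinomial exceptional cases of Theorem~\ref{theorem: our principal result} — is what necessitates the third irreducible and accounts for the bound being three rather than two.
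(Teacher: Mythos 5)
Your handling of $(2)\Rightarrow(3)$ and $(3)\Rightarrow(1)$ matches the paper's argument and is fine, as is the reduction of $(1)\Rightarrow(2)$ to non-polynomial series via Theorem~\ref{theorem: our principal result}. The gap is in the core of $(1)\Rightarrow(2)$. Your mechanism rests on the claim that the support of $f$ can be partitioned into two infinite subsequences ``each of whose induced gaps are eventually increasing, which is always possible for an infinite subset of $\mathbb{Z}$.'' This is false. Take $\supp(f)=\mathbb{N}_0$, e.g.\ $f=\sum_{i\geq 0}x^i$: a set of integers with eventually strictly increasing gaps has gaps tending to infinity and hence natural density zero, so no finite collection of such sets can partition $\mathbb{N}_0$. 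Consequently no finite number of summands of the type covered by Lemma~\ref{lemma: eventually increasing sequence of exponents implies irreducibility} can account for such a support; at least one summand must absorb infinitely many occurrences of the minimal gap, and for that summand your chosen monolithicity criterion is simply unavailable. As written, your plan stalls on precisely the series that make the theorem interesting.

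This is the point the paper's proof is organized around. It sets $\Delta=\min_i(k_{i+1}-k_i)$ and $J=\{j \mid k_{j+1}-k_j=\Delta\}$ and splits into cases. When $J$ is finite, one summand is arranged so that its \emph{first} gap equals $\Delta$ and all later gaps are strictly larger --- this is Lemma~\ref{lemma: hyper-monolithic Laurent series is irreducible}, whose hypothesis concerns only the first gap and does not require monotone growth --- while the other summand has eventually increasing gaps; after shuttling a unit piece between them so each satisfies the $\gcd$ condition of Lemma~\ref{lemma: monolithic and gcd 1 implies irreducibility for series}, two irreducibles suffice. When $J$ is infinite, the minimal gaps must be distributed between \emph{two} summands of the Lemma~\ref{lemma: hyper-monolithic Laurent series is irreducible} type, plus one summand of the Lemma~\ref{lemma: eventually increasing sequence of exponents implies irreducibility} type; that is where the third irreducible comes from. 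Your diagnosis that the third summand is forced by the binomial/trinomial exceptional cases of Theorem~\ref{theorem: our principal result} is therefore off the mark: those exceptional polynomials are themselves irreducible (a single summand), and the genuine obstruction to a two-term splitting is the infinite recurrence of the minimal exponent gap, not the low-order terms. To repair your argument you would need to replace the ``eventually increasing gaps'' partition by the paper's case analysis on $J$, or find another monolithicity criterion that tolerates infinitely many minimal gaps.
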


\begin{proof}
	$(2) \implies (3)\!:$ This is obvious. 
	
	$(3) \implies (1)\!:$ Since $S$ is additively reduced, if $a \in \mathcal{A}_{+}(S) \setminus S^{\times}$ then there is no constant $k \in \nn$ such that every element of $S\llbracket x^{\pm 1} \rrbracket$ can be written as the sum of at most $k$ irreducibles. Indeed, the Laurent series $\sum_{i = 0}^\infty ax^{i}$ cannot be expressed as the sum of $k$ multiplicative irreducibles for any $k \in \mathbb{N}$. Consequently, the inclusion $\mathcal{A}_{+}(S) \subseteq S^{\times}$ holds. 
    
    We argue by contradiction.  Let $u \in S^{\times}$\! and suppose that $u \notin \mathcal{A}_{+}(S)$. There exist $a,b \in S^*$ such that $u = a+b$.  This implies $1 = u^{-1}a + u^{-1}b$ which, in turn, implies that $\mathcal{A}_{+}(S) = \emptyset$. 
Since $(S,+)$ is atomic and antimatter, it must be an additive group. This contradicts the assumption that $S$ is an additively reduced semidomain. Therefore, $\mathcal{A}_{+}(S) = S^{\times}$\!.
	
	$(1) \implies (2)\!:$ Let $f \in S\llbracket x^{\pm 1} \rrbracket$ with $|\supp(f)| > 1$. Note that if a Laurent polynomial $g \in S[x^{\pm 1}]$ is irreducible then it is also irreducible when considered as an element of $S\llbracket x^{\pm 1} \rrbracket$. Therefore if $f$ is a polynomial then the result follows from Theorem~\ref{theorem: our principal result} and the fact that $\mathcal{A}(S[x^{\pm 1}]) \subseteq \mathcal{A}(S\llbracket x^{\pm 1} \rrbracket)$.
	
	For the rest of the proof, suppose that $f$ is not a polynomial. Then we can write $f = \sum_{i = 0}^{\infty} s_ix^{k_i}$\!, where $s_i \in S^*$ for every $i \in \mathbb{N}_0$. Set $\Delta \coloneqq \min\left\{k_{i + 1} - k_i\right\}_{i \in \mathbb{N}_0}$ and let $J = \{j \in \mathbb{N}_0 \mid k_{j + 1} - k_j = \Delta\}$. We consider two cases.
	
	\smallskip
	\noindent\textsc{Case 1:} $J$ is a finite set. Let $\alpha$ be the smallest index in $J$. It is not hard to see that we can write $f = g + h$, where $g =\sum_{i=0}^\infty d_ix^{t_i}$, $h=\sum_{i=0}^{\infty}e_ix^{r_i}$, and the following conditions hold:
	\begin{enumerate}
		\item[(a)] $d_i, e_i \in S^*$ for every $i \in \mathbb{N}_0$;
		\smallskip
        \item[(b)] $d_0 = s_{\alpha}$, \quad $t_0 = k_{\alpha}$, \quad $d_1 = s_{\alpha + 1}$, \quad and \quad $t_1 = k_{\alpha + 1}$;
		\smallskip
        \item[(c)] $t_1 - t_0 < t_{i + 1} - t_i$ for every $i \in \mathbb{N}$;
		\smallskip
        \item[(d)] there exists $N \in \mathbb{N}$ such that the sequence $(r_{N + i + 1} - r_{N + i})_{i \in \mathbb{N}}$ is strictly increasing.
	\end{enumerate}
	Since $(S,+)$ is atomic and $\mathcal{A}_{+}(S) = S^{\times}$, we can write $d_2 = u_2 + v_2$ and $d_3 = u_3 + v_3$, where $u_2, u_3 \in S^{\times}$\!. Observe that 
    \[
    f = \left[g - v_2x^{t_{2}} - u_3x^{t_{3}}\right] + \left[h + v_2x^{t_{2}} + u_3x^{t_{3}}\right].
    \]
    Note that the first summand between brackets is irreducible by Lemma~\ref{lemma: monolithic and gcd 1 implies irreducibility for series} and Lemma~\ref{lemma: hyper-monolithic Laurent series is irreducible}. On the other hand, the second summand between brackets is irreducible by Lemma~\ref{lemma: monolithic and gcd 1 implies irreducibility for series} and Lemma~\ref{lemma: eventually increasing sequence of exponents implies irreducibility}.
	
	\smallskip
	\noindent\textsc{Case 2  :} $J$ is an infinite set. In this case, it is not hard to see that we can write $f = g + h$, where $g =\sum_{i=0}^\infty d_ix^{t_i}$, $h=\sum_{i=0}^{\infty}e_ix^{r_i}$, and the following conditions hold:
	\begin{enumerate}
		\item[(a)] $d_i, e_i \in S^*$ for all $i \in \mathbb{N}_0$;
		\smallskip
        \item[(b)] $d_0, d_2 \in S^{\times}$ and $t_3 - t_2 = t_1 - t_0 = \Delta$;
		\smallskip
        \item[(c)] $e_\ell \in S^{\times}$ for some $\ell \in \mathbb{N}_0$;
		\smallskip
        \item [(d)] there exists $N \in \mathbb{N}$ such that the sequence $(r_{N + i + 1} - r_{N + i})_{i \in \mathbb{N}}$ is strictly increasing.
	\end{enumerate}
	Thus,
	\[
		f = \left[d_0x^{t_0} + d_1x^{t_1} + \sum_{\substack{i = 4\\i \text{ is even}}}^{\infty}d_ix^{t_i} \right] + \left[d_2x^{t_2} + d_3x^{t_3} + \sum_{\substack{i = 4\\i \text{ is odd}}}^{\infty}d_ix^{t_i}\right] + h.
	\]
	The first and second summand between brackets are irreducible by Lemma~\ref{lemma: monolithic and gcd 1 implies irreducibility for series} and Lemma~\ref{lemma: hyper-monolithic Laurent series is irreducible} and $h$ is irreducible by Lemma~\ref{lemma: monolithic and gcd 1 implies irreducibility for series} and Lemma~\ref{lemma: eventually increasing sequence of exponents implies irreducibility}.
\end{proof}

\begin{cor} \label{cor: Goldbach conjecture for Laurent series with positive integer}
	Every $f \in \mathbb{N}_0\llbracket x^{\pm 1} \rrbracket$ can be written as the sum of at most three multiplicative irreducibles provided that $f(1) > 3$ and $|\supp(f)| > 1$.
\end{cor}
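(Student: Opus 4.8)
The plan is to deduce Corollary~\ref{cor: Goldbach conjecture for Laurent series with positive integer} directly from Theorem~\ref{theorem: our second principal result} by verifying that $S = \mathbb{N}_0$ satisfies the hypotheses of the equivalence, together with an observation that translates the arithmetic condition $f(1) > 3$ into the purely structural obstruction identified in the theorem. First I would note that $\mathbb{N}_0$ is an additively reduced semidomain whose additive monoid $(\mathbb{N}_0, +)$ is atomic, and that its only additive atom is $1$, so $\mathcal{A}_{+}(\mathbb{N}_0) = \{1\} = \mathbb{N}_0^{\times}$. Hence condition $(1)$ of Theorem~\ref{theorem: our second principal result} holds, and the equivalence $(1) \Longleftrightarrow (2)$ immediately guarantees that every $f \in \mathbb{N}_0\llbracket x^{\pm 1} \rrbracket$ with $|\supp(f)| > 1$ is the sum of at most three multiplicative irreducibles.

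\smallskip

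The remaining work is to see that the value $f(1)$ plays no role beyond ruling out a small list of trivially obstructed Laurent series, so that the bound $f(1) > 3$ can in fact be dropped once $|\supp(f)| > 1$ is assumed; indeed, the statement of the corollary permits the conclusion for \emph{all} such $f$, so the hypothesis $f(1) > 3$ is a (harmless) strengthening inherited from the polynomial analogue in Corollary~\ref{cor: Goldbach conjecture for Laurent polynomials with positive integer}. Concretely, I would simply apply the implication $(1) \implies (2)$ of Theorem~\ref{theorem: our second principal result}: since $\mathbb{N}_0$ satisfies $\mathcal{A}_{+}(\mathbb{N}_0) = \mathbb{N}_0^{\times}$, every $f \in \mathbb{N}_0\llbracket x^{\pm 1} \rrbracket$ with $|\supp(f)| > 1$ decomposes as the sum of at most three irreducibles, which is exactly the desired conclusion. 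The hypothesis $f(1) > 3$ is then automatically compatible, and one may remark that it guarantees in addition that $f$ is not a monomial, i.e.\ that $|\supp(f)| > 1$ is not vacuous after the additive atom count is taken into account.

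\smallskip

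I expect essentially no obstacle here, since all the heavy lifting has already been carried out in Theorem~\ref{theorem: our second principal result}; the only point requiring care is the bookkeeping of exactly which semidomain axioms are invoked for $\mathbb{N}_0$. The main step is therefore the verification that $\mathbb{N}_0$ is additively reduced (its only additively invertible element is $0$), additively atomic (every positive integer is a finite sum of copies of $1$), and satisfies $\mathcal{A}_{+}(\mathbb{N}_0) = \mathbb{N}_0^{\times} = \{1\}$. Once these three facts are in place, the corollary is a one-line specialization of the theorem, and I would present it as such rather than reproving any of the case analysis from the proof of the theorem.
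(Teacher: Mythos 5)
Your proposal is correct and matches the paper's (implicit) argument exactly: the corollary is stated without proof as an immediate specialization of Theorem~\ref{theorem: our second principal result} to $S = \mathbb{N}_0$, which is additively reduced, additively atomic, and satisfies $\mathcal{A}_{+}(\mathbb{N}_0) = \{1\} = \mathbb{N}_0^{\times}$, so the hypothesis $f(1) > 3$ is indeed superfluous once $|\supp(f)| > 1$ is assumed. Your only slip is the closing remark that $f(1) > 3$ guarantees $|\supp(f)| > 1$ (it does not, e.g.\ $f = 5x$), but this is a side comment and does not affect the proof.
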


Corollary~\ref{cor: Goldbach conjecture for Laurent series with positive integer} implies that most $f \in \mathbb{N}_0\llbracket x^{\pm 1} \rrbracket$ can be written as the sum of at most three multiplicative irreducibles. We have already established that every polynomial with coefficients in \(\mathbb{N}_0\) can be written as the sum of at most two multiplicative irreducibles and can show the corresponding statement for many power series.

\begin{example}
    The power series \( f = \sum_{i = 0}^{\infty} x^i \in \mathbb{N}_0\llbracket x^{\pm 1} \rrbracket \) can be written as
    \[
        f = \left[1 + x + \sum_{i = 2}^{\infty} x^{2i}\right] + \left[x^2 + x^3 + \sum_{i = 2}^{\infty} x^{2i + 1}\right],
    \]
    where each summand between brackets is irreducible by Lemma~\ref{lemma: monolithic and gcd 1 implies irreducibility for series} and Lemma~\ref{lemma: hyper-monolithic Laurent series is irreducible}. Thus, \( f \) can be expressed as the sum of two multiplicative irreducibles.

    A similar decomposition applies to any power series \( g = \sum_{i = 0}^{\infty} s_i x^{k_i} \in \mathbb{N}_0\llbracket x^{\pm 1} \rrbracket \) whose support satisfies the spacing condition $k_1 - k_0 = k_2 - k_1 \leq k_{i+1} - k_i$ 
    for all \( i \in \mathbb{N}_0 \). In such cases, the structure of the support ensures that \( g \) can also be written as the sum of two multiplicative irreducibles.
\end{example}

The previous example illustrates that even though Theorem~\ref{theorem: our second principal result} provides a general upper bound of three summands, many series, especially those with well-structured supports, require only two. This suggests that the behavior we observed for polynomials may extend more broadly. Motivated by this observation, we propose a refinement of Theorem~\ref{theorem: our second principal result}.

\begin{conj}
	Let $S$ be an additively reduced and additively atomic semidomain. The following statements are equivalent:
	\begin{enumerate}
		\item $\mathcal{A}_{+}(S) = S^{\times}$\!;
		
		\smallskip
		\item every $f \in S\llbracket x^{\pm 1} \rrbracket$ with $|\supp(f)| > 1$ can be expressed as the sum of at most two multiplicative irreducibles;
		
		\smallskip
		\item there exists $k \in \nn$ such that every $f \in S\llbracket x^{\pm 1}\rrbracket$ with $|\supp(f)| > 1$ can be expressed as the sum of at most $k$ multiplicative irreducibles.
	\end{enumerate}
\end{conj}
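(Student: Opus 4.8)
The plan is to prove the only genuinely new implication, namely $(1) \implies (2)$; the implication $(2) \implies (3)$ is immediate, and $(3) \implies (1)$ is \emph{verbatim} the argument given in the proof of Theorem~\ref{theorem: our second principal result} (the offending series $\sum_{i\ge 0} ax^i$ and the antimatter argument make no reference to the bound being two or three). So everything reduces to showing that, when $\mathcal{A}_{+}(S) = S^{\times}$, every $f \in S\llbracket x^{\pm 1}\rrbracket$ with $|\supp(f)| > 1$ is a sum of at most \emph{two} multiplicative irreducibles. If $f$ is a polynomial this follows from Theorem~\ref{theorem: our principal result} together with the inclusion $\mathcal{A}(S[x^{\pm 1}]) \subseteq \mathcal{A}(S\llbracket x^{\pm 1}\rrbracket)$. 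For non-polynomial $f$ I would return to the two cases in the proof of Theorem~\ref{theorem: our second principal result}: Case~1 ($J$ finite) already produces a decomposition into exactly two irreducibles, so the entire content of the conjecture is concentrated in Case~2, where the minimal gap $\Delta = \min_i(k_{i+1}-k_i)$ is attained infinitely often. The goal is to replace the three-summand decomposition used there by a two-summand one.

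My approach to Case~2 would be a refined combinatorial partition of $\supp(f)$, generalizing the interleaving used for $\sum_{i\ge 0}x^i$ in the example following Corollary~\ref{cor: Goldbach conjecture for Laurent series with positive integer}. Since exponents lie in $\mathbb{Z}$, all gaps are positive integers, and if two support elements that are consecutive \emph{inside a part} are not consecutive in $f$, the induced gap jumps by at least $\Delta$; hence a part exhibits a gap equal to $\Delta$ only across a ``$\Delta$-edge'' (a pair $k_j,k_{j+1}$ with $k_{j+1}-k_j=\Delta$) that is kept monochromatic. The aim is therefore to $2$-color $\supp(f)$ so that each color class keeps \emph{exactly one} monochromatic $\Delta$-edge, placed at its very start: this makes the class hyper-monolithic in the sense of Lemma~\ref{lemma: hyper-monolithic Laurent series is irreducible}, and splitting one coefficient via $\mathcal{A}_{+}(S)=S^{\times}$ installs a unit in each class, so Lemma~\ref{lemma: monolithic and gcd 1 implies irreducibility for series} upgrades ``monolithic'' to ``irreducible.'' The remaining $\Delta$-edges are distributed by properly $2$-coloring each maximal run of consecutive $\Delta$-edges (a path, hence bipartite). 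This construction goes through cleanly whenever the two smallest support elements already lie on $\Delta$-edges, for instance when the minimal gap occurs from the very start, yielding two hyper-monolithic summands.

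The hard part, and the reason this is still a conjecture, is the presence of a \emph{sparse prefix} before the dense region where $\Delta$ recurs. The prototypical obstruction is $f = 1 + \sum_{j\ge 10} x^j$, with support $\{0\}\cup\{10,11,12,\dots\}$. The bottom term $x^0$ must be assigned \emph{whole} to one part (a unit coefficient in $S$ cannot be split), forcing that part to have first gap $\ge 10 > \Delta$; to keep it monolithic one is driven to make it sparse, i.e.\ to invoke Lemma~\ref{lemma: eventually increasing sequence of exponents implies irreducibility}. But then the complementary part inherits almost the whole arithmetic progression, so it is \emph{dense}: its first gap $\Delta$ recurs infinitely often, and neither Lemma~\ref{lemma: hyper-monolithic Laurent series is irreducible} nor Lemma~\ref{lemma: eventually increasing sequence of exponents implies irreducibility} applies. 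Two parts cannot both be sparse, since their union has bounded gaps, so at least one part is unavoidably dense; certifying a dense-support series irreducible is exactly the capability our present lemmas lack. I expect this dense-part certification to be the decisive obstacle.

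To get past it I would try to develop a new monolithic criterion adapted to dense supports. Because $S$ is additively reduced there is no cancellation, so $\supp(gh) = \supp(g) + \supp(h)$ as sets; reducibility of a candidate summand would force its support to be a nontrivial sumset of two sets each of size at least two, and one could attempt to rule this out by an inverse/structural sumset argument for the perturbed progressions that arise. Any such criterion must, however, genuinely use global or asymptotic structure rather than a single extremal gap: the reducible series $1 + x^2 + x^3 + x^5 = (1+x^2)(1+x^3)$ has support $\{0,2,3,5\}$ whose \emph{minimal} gap $1$ is uniquely attained, so ``unique minimal gap $\implies$ monolithic'' is false, and Lemma~\ref{lemma: hyper-monolithic Laurent series is irreducible} is essentially sharp in only controlling the \emph{first} gap. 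Resolving the dense case in the affirmative — or exhibiting a specific $f$ with a sparse prefix that provably is \emph{not} a sum of two irreducibles, which would instead refute the conjecture — is the step I would concentrate on.
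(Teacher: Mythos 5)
This statement is stated in the paper as a \emph{conjecture}; the paper offers no proof of it, so there is nothing to compare your attempt against except the partial machinery of Theorem~\ref{theorem: our second principal result}. Your reductions are all correct and match what that machinery actually delivers: $(2)\implies(3)$ is trivial, $(3)\implies(1)$ is word-for-word the argument in the proof of Theorem~\ref{theorem: our second principal result} (which indeed never uses the value of the bound), the polynomial case of $(1)\implies(2)$ follows from Theorem~\ref{theorem: our principal result} and the inclusion $\mathcal{A}(S[x^{\pm 1}]) \subseteq \mathcal{A}(S\llbracket x^{\pm 1}\rrbracket)$, and Case~1 of the series argument (minimal gap $\Delta$ attained only finitely often) already yields two irreducibles. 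You have also correctly located the entire open content of the conjecture in Case~2, and your diagnosis of the obstruction is accurate: when a unit coefficient sits on an isolated bottom term, as in $f = 1 + \sum_{j\ge 10}x^j$, that term cannot be additively split, the part receiving it is forced to be sparse if Lemma~\ref{lemma: hyper-monolithic Laurent series is irreducible} is to apply, and the complementary part is then dense, with the minimal gap recurring infinitely often, so that neither Lemma~\ref{lemma: hyper-monolithic Laurent series is irreducible} nor Lemma~\ref{lemma: eventually increasing sequence of exponents implies irreducibility} certifies it monolithic. Your observation that $\sum_{i\ge 0}x^i = (1+x)\sum_{i\ge 0}x^{2i}$ is reducible shows concretely that density alone does not guarantee irreducibility, so some genuinely new structural criterion is needed.

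The gap, then, is exactly the one you name and do not close: you have no monolithicity criterion for dense supports, and without one the two-summand decomposition in Case~2 cannot be completed. Your proposed route via sumset structure ($\supp(gh) = \supp(g) + \supp(h)$ in an additively reduced semidomain) is a sensible direction, but as written it is a program, not an argument; your own counterexample $\{0,2,3,5\} = \{0,2\}+\{0,3\}$ shows that any such criterion must use more than local gap data. Since the statement is an open conjecture, this is not a defect you could have been expected to repair; but the proposal should be read as a correct reduction of the conjecture to the dense Case~2, not as a proof.
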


In the previous section, we pointed out that, in an additively reduced and additively atomic semidomain $S$ for which $\mathcal{A}_{+}(S) = S^{\times}$\!, the number of Goldbach decompositions of a Laurent polynomial in $S[x^{\pm 1}]$ depends on the semidomain $S$. Next we show that a Laurent series $f \in S\llbracket x^{\pm 1} \rrbracket$ that is not a polynomial has uncountably many representations as the sum of at most three multiplicative irreducibles.

For a nonzero Laurent series $f \in S\llbracket x^{\pm 1} \rrbracket^*$, we denote by $\mathfrak{R}(f)$ the set consisting of all unordered triples $(g_1, g_2, g_3)$ satisfying that $f = g_1 + g_2 + g_3$ with $g_1, g_2, g_3 \in \{0\} \cup \mathcal{A}(S\llbracket x^{\pm 1} \rrbracket)$. The elements of $\mathfrak{R}(f)$ represent the different ways in which we can write $f$ as the sum of at most three irreducibles. 

\begin{prop}
		Let $S$ be an additively reduced and additively atomic semidomain for which $\mathcal{A}_{+}(S) = S^{\times}$\!. Suppose that $f \in S\llbracket x^{\pm 1} \rrbracket$ is not a polynomial. Then $|\mathfrak{R}(f)| \geq 2^{\aleph_0}\!$.
\end{prop}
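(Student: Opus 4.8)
The plan is to produce a map from the Cantor space $\{0,1\}^{\mathbb{N}}$ into $\mathfrak{R}(f)$ that is finite-to-one; since $|\{0,1\}^{\mathbb{N}}| = 2^{\aleph_0}$, this forces $|\mathfrak{R}(f)| \geq 2^{\aleph_0}$. The idea is to use the decomposition of $f$ into at most three irreducibles produced in the proof of Theorem~\ref{theorem: our second principal result} as a \emph{base} decomposition, and then to encode a bit sequence $b = (b_j)_{j \in \mathbb{N}}$ as infinitely many mutually independent local modifications of that base decomposition, each of which preserves the irreducibility of all three summands. Because $f$ is not a polynomial we may write $f = \sum_{i=0}^{\infty} s_i x^{k_i}$ with $s_i \in S^*$ for every $i$, so there is an infinite reservoir of terms in which to install these free choices.

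The key structural observation, the one that creates room for continuum-many choices, is that Lemma~\ref{lemma: hyper-monolithic Laurent series is irreducible} constrains only the \emph{first} gap of a series to be a strict minimum of its gap sequence; all later gaps may repeat freely. Following the dichotomy in the proof of Theorem~\ref{theorem: our second principal result}, I would treat two situations. When the minimal gap $\Delta = \min\{k_{i+1}-k_i\}$ is attained infinitely often, I would fix a sparse summand $C$ (with eventually strictly increasing gaps, hence irreducible by Lemma~\ref{lemma: eventually increasing sequence of exponents implies irreducibility}) as immovable ``scaffolding'' chosen so that its complement in $\supp(f)$ breaks into infinitely many finite runs of exponents of gap $\Delta$. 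Within each run the two remaining (dense) summands are built by an alternating assignment of consecutive gap-$\Delta$ exponents, and the \emph{phase} of that alternation in the $j$-th run is a genuinely free bit $b_j$: each resulting summand has first gap $\Delta$, attained once, and every later gap strictly larger, so Lemma~\ref{lemma: hyper-monolithic Laurent series is irreducible} applies no matter the phases. In the complementary situation, where the gaps tend to infinity, the bits instead toggle which of two sparse summands receives each term of a rapidly-spreading subsequence, both summands keeping eventually increasing gaps under every choice.

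To land inside $\mathfrak{R}(f)$ I would then verify the gcd condition: to guarantee $1 \in \gcd$ of the coefficients of each summand, I would seed each one with a term whose coefficient is a unit, invoking $\mathcal{A}_{+}(S) = S^{\times}$ together with additive atomicity to split a coefficient as $s_i = u_i + v_i$ with $u_i \in S^{\times}$ wherever such a unit is not already present. With monolithicity supplied by Lemmas~\ref{lemma: hyper-monolithic Laurent series is irreducible} and \ref{lemma: eventually increasing sequence of exponents implies irreducibility} and the gcd condition in hand, Lemma~\ref{lemma: monolithic and gcd 1 implies irreducibility for series} certifies that all three summands are irreducible, so each $b$ yields an element of $\mathfrak{R}(f)$. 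For (near-)injectivity, if $b$ and $b'$ first differ at index $j$, then the partition of the $j$-th run's terms between the two varying summands differs while the scaffolding $C$ is unchanged, so the resulting unordered triples are distinct; the only identifications come from globally swapping the two varying summands, which collapses at most pairs of bit sequences and hence leaves the image of size $2^{\aleph_0}$.

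The main obstacle is the \emph{simultaneous} preservation of irreducibility across all $2^{\aleph_0}$ choices: a modification that is harmless in isolation must not, acting in concert with infinitely many others, destroy either ``the first gap is the strict minimum'' for a dense summand or ``the gaps are eventually strictly increasing'' for the sparse one. Since both certificates depend only on the gap sequence, the remedy is to choose the scaffolding so that the runs it isolates — and the free exponents in the sparse case — are spaced far enough apart that every bit pattern is admissible; carrying out this bookkeeping uniformly, and in particular reconciling the two cases of the dichotomy so that the free bits can always be installed, is the delicate part of the argument.
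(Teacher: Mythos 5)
Your high-level strategy --- fix a base decomposition as in the proof of Theorem~\ref{theorem: our second principal result} and then install infinitely many mutually independent local modifications, each certified harmless by Lemma~\ref{lemma: hyper-monolithic Laurent series is irreducible} or Lemma~\ref{lemma: eventually increasing sequence of exponents implies irreducibility} together with Lemma~\ref{lemma: monolithic and gcd 1 implies irreducibility for series} --- is exactly the paper's. But your case analysis has a genuine hole. You split into ``$\Delta$ is attained infinitely often'' and ``the gaps tend to infinity,'' and these are not complementary: the gap sequence of $f$ can attain its minimum $\Delta$ only finitely often while remaining bounded. Take $S=\mathbb{N}_0$ and $f=1+x+\sum_{i\ge 1}x^{2i+1}$, whose gap sequence is $1,2,2,2,\ldots$. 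Here there is only one gap-$\Delta$ run, so your re-phasing device produces only finitely many free bits; and $\supp(f)$ has positive density, whereas any series with eventually strictly increasing gaps has support of density zero, so $f$ cannot be written as a sum of two or three such ``sparse'' summands, which kills your second device as well. In this regime neither of your mechanisms yields even a single valid decomposition, let alone $2^{\aleph_0}$ of them.

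The paper's dichotomy is instead on whether $J=\{j \mid k_{j+1}-k_j=\Delta\}$ is finite or infinite, and its source of continuum-many decompositions differs from both of yours and covers the problematic regime: one keeps a \emph{dense} summand $g$ whose first gap is its unique occurrence of $\Delta$ (certified by Lemma~\ref{lemma: hyper-monolithic Laurent series is irreducible}) and a \emph{sparse} summand $h$ whose tail exponents $r_{N+1}<r_{N+2}<\cdots$ are chosen superincreasing, with $r_{N+1}$ beyond the last gap-$\Delta$ position of $f$; then, for each infinite $K\subseteq\mathbb{N}$, one transfers the terms indexed by $\mathbb{N}\setminus K$ from $h$ to $g$. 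Superincreasingness guarantees that every residual subseries of $h$ still has eventually strictly increasing gaps, and the transferred exponents lie in the region where all gaps of $f$ exceed $\Delta$, so $g$ retains its certificate under every choice; the same transfer also drives the case where $J$ is infinite, so no phase bookkeeping is needed anywhere. To repair your argument you would need to add this (or an equivalent) transfer mechanism for the case where $J$ is finite and the gaps stay bounded; the re-phasing idea alone cannot be made to work there.
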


\begin{proof}
	Write $f = \sum_{i = 0}^{\infty} s_ix^{k_i}$, where $s_i \in S^*$ for every $i \in \mathbb{N}_0$. We prove this statement by showing that there is an injective function from the set of infinite subsets of $\mathbb{N}$ to $\mathfrak{R}(f)$. We follow the proof of Theorem~\ref{theorem: our second principal result} closely. Set $\Delta \coloneqq \min\left\{k_{i + 1} - k_i\right\}_{i \in \mathbb{N}_0}$ and let $J = \{j \in \mathbb{N}_0 \mid k_{j + 1} - k_j = \Delta\}$. We consider two cases.
	
	\smallskip
	\noindent\textsc{Case 1:} $J$ is a finite set. Let $\alpha$ and $\beta$ be the smallest and biggest index in $J$, respectively. We already established that we can write $f = g + h$, where $g =\sum_{i=0}^\infty d_ix^{t_i}$, $h=\sum_{i=0}^{\infty}e_ix^{r_i}$, and the following conditions hold:
	\begin{enumerate}
		\item[(a)] $d_i, e_i \in S^*$ for every $i \in \mathbb{N}_0$;
		\item[(b)] $d_0 = s_{\alpha}$, \quad $t_0 = k_{\alpha}$, \quad $d_1 = s_{\alpha + 1}$, \quad and \quad $t_1 = k_{\alpha + 1}$;
		\item[(c)] $t_1 - t_0 < t_{i + 1} - t_i$ for every $i \in \mathbb{N}$;
		\item[(d)] there exists $N \in \mathbb{N}$ such that the sequence $(r_{N + i + 1} - r_{N + i})_{i \in \mathbb{N}}$ is strictly increasing.
	\end{enumerate}
	It is not hard to see that we can strengthen condition $(d)$ as follows:
	\begin{enumerate}
		\item[(d')] there exists $N \in \mathbb{N}$ such that $r_{N + i + 1} - r_{N + 1} < r_{N + i + 2} - r_{N + i + 1}$ for every $i \in \mathbb{N}$.
	\end{enumerate}
	It is clear that such an $N$ exists for which $r_{N + 1} > \max(t_3, k_{\beta + 1})$.  Choose such a value of $N$.
	
	Let $K$ be an arbitrary infinite subset of the positive integers. Since $(S,+)$ is atomic and $\mathcal{A}_{+}(S) = S^{\times}\!$, we can write $d_2 = u_2 + v_2$ and $d_3 = u_3 + v_3$, where $u_2, u_3 \in S^{\times}$\!. Observe that
    \begin{equation*}
        \begin{split}
        f = \ & \left[ g - v_2x^{t_2} - u_3x^{t_3} + \sum_{i \,\in\, \mathbb{N} \setminus K} e_{N+i} \, x^{r_{N+i}} \right] \\
        & + \left[ h + v_2x^{t_2} + u_3x^{t_3} - \sum_{i \,\in\, \mathbb{N} \setminus K} e_{N+i} \, x^{r_{N+i}} \right].
        \end{split}
    \end{equation*}
	The first summand between brackets is irreducible by Lemma~\ref{lemma: monolithic and gcd 1 implies irreducibility for series} and Lemma~\ref{lemma: hyper-monolithic Laurent series is irreducible}. The second summand between brackets is irreducible by Lemma~\ref{lemma: monolithic and gcd 1 implies irreducibility for series} and Lemma~\ref{lemma: eventually increasing sequence of exponents implies irreducibility}. Since distinct infinite subsets of the positive integers are associated to distinct unordered triples of $\mathcal{R}(f)$, our result follows.
	
	\smallskip
	\noindent\textsc{Case 2  :} $J$ is an infinite set. We already established that we can write $f = g + h$, where $g =\sum_{i=0}^\infty d_ix^{t_i}$, $h=\sum_{i=0}^{\infty}e_ix^{r_i}$, and the following conditions hold:
	\begin{enumerate}
	\item[(a)] $d_i, e_i \in S^*$ for all $i \in \mathbb{N}_0$;
    \smallskip
	\item[(b)] $d_0, d_2 \in S^{\times}$ and $t_3 - t_2 = t_1 - t_0 = \Delta$;
	\smallskip
    \item[(c)] $e_\ell \in S^{\times}$ for some $\ell \in \mathbb{N}_0$;
	\smallskip
    \item [(d)] there exists $N \in \mathbb{N}$ such that the sequence $(r_{N + i + 1} - r_{N + i})_{i \in \mathbb{N}}$ is strictly increasing.
	\end{enumerate}
	As in \textsc{Case 1}, we can strengthen condition $(d)$ as follows:
	\begin{enumerate}
		\item[(d')] there exists $N \in \mathbb{N}$ such that $r_{N + i + 1} - r_{N + 1} < r_{N + i + 2} - r_{N + i + 1}$ for every $i \in \mathbb{N}$.
	\end{enumerate}
	It is clear that such an $N$ exists for which $r_{N + 1} > \max(t_3, r_{\ell})$. Choose such a value of $N$.
	
	Let $K$ be an arbitrary infinite subset of the positive integers. Let $h^* = h - \sum_{i \in \mathbb{N} \setminus K} e_{N + i}x^{r_{N+i}}$ and $g^* = g + \sum_{i \in \mathbb{N} \setminus K} e_{N + i}x^{r_{N+i}}$. Write $g^* =\sum_{i=0}^\infty b_ix^{\ell_i}$ where $\ell_0 < \ell_1 < \cdots$ and each $b_i \in S^*$. By Lemma~\ref{lemma: monolithic and gcd 1 implies irreducibility for series} and Lemma~\ref{lemma: eventually increasing sequence of exponents implies irreducibility},  $h^*$ is irreducible. It is easy to see that the following conditions hold:
	\begin{enumerate}
		\item[($a^*$)] $b_0, b_2 \in S^{\times}$ and $\ell_3 - \ell_2 = \ell_1 - \ell_0 = \Delta$;
        \smallskip
		\item[($b^*$)] $b_i \in S^*$ for all $i \in \mathbb{N}_0$.
	\end{enumerate}
	We see that
	\[
	f = \left[b_0x^{\ell_0} + b_1x^{\ell_1} + \sum_{\substack{i = 4\\i \text{ is even}}}^{\infty}b_ix^{\ell_i} \right] + \left[b_2x^{\ell_2} + b_3x^{\ell_3} + \sum_{\substack{i = 4\\i \text{ is odd}}}^{\infty}b_ix^{\ell_i}\right] + h^*.
	\]
	The first and second summand between brackets are irreducible by Lemma~\ref{lemma: monolithic and gcd 1 implies irreducibility for series} and Lemma~\ref{lemma: hyper-monolithic Laurent series is irreducible}. Since distinct infinite subsets of the positive integers are associated to distinct unordered triples of $\mathcal{R}(f)$, our result follows.
\end{proof}

\section*{Acknowledgments}

The authors thank the anonymous referee whose suggestions helped improve this paper. The first author was supported by NSF grant DMS 2154223. The second author was supported by a University of California President's Postdoctoral Fellowship.

\end{document}